\pdfoutput=1
\documentclass[11pt]{article}
\usepackage[a4paper,margin=2.5cm]{geometry}
\usepackage{amsmath,amssymb,amsthm}
\usepackage{graphicx,booktabs,array}
\usepackage[hidelinks]{hyperref}
\usepackage{microtype}
\usepackage[section]{placeins}

\newtheorem{proposition}{Proposition}[section]
\newtheorem{theorem}[proposition]{Theorem}
\newtheorem{lemma}[proposition]{Lemma}

\theoremstyle{remark}
\newtheorem{remark}[proposition]{Remark}

\newcommand{\dd}{\mathrm{d}}

\newcommand{\eps}{\varepsilon}
\renewcommand{\Re}{\operatorname{Re}}
\renewcommand{\Im}{\operatorname{Im}}

\title{Front-pinned spectral collocation for the flame problem\\ and related ignition problems}
\author{Imre Boros\\[2pt]
\small Tiberiu Popoviciu Institute of Numerical Analysis, Romanian Academy, Cluj-Napoca, Romania\\
\small \texttt{imre.boros@ictp.acad.ro}}
\date{}

\begin{document}
\maketitle

\begin{abstract}
We consider global Chebyshev collocation for the ignition problems $y'=y^q(1-y)$, $y(0)=\delta$, $q\ge2$; for $q=2$ this is the flame problem, a standard stiff test equation. After rescaling to $[-1,1]$ the solution has an interior layer of width $O(\mu)$, $\mu=(q-1)\delta^{q-1}$, at a $\delta$-dependent position. Two obstacles arise: plain Chebyshev collocation needs $O(1/\mu)$ nodes, and imposing the initial condition amplifies errors by a factor of order $1/\mu$, which makes the collocation equations severely ill-conditioned. Our main contribution addresses the second one. Since all solutions are translates of one profile, the initial condition can be replaced by an equivalent interior condition. For an unknown $u=\varphi(y)$, imposing it where the speed $|\varphi'f_q|$ of $u$ along the orbit is maximal bounds the propagator of the linearized equation by one; for $u=\ln y$ this is $y(x^\ast)=(q-1)/q$. For the first obstacle we use the sinh map of Tee and Trefethen: the time function is explicit, the complex singularities of the profile lie exactly at $\pm(2k+1)\pi i$, and the map parameters follow in closed form, so that $O(\ln(1/\mu))$ nodes suffice. In double precision the pinned scheme reaches the interpolation error of the exact solution in five or six Newton iterations for $\mu$ down to $10^{-12}$, whereas the initial-condition form fails or converges to wrong discrete solutions, also in an integral (Birkhoff) reformulation. Computations in $256$-bit arithmetic confirm the predicted rate down to errors of $10^{-70}$, AAA approximation locates the singularities to within $2\%$, and the method carries over to an Arrhenius reaction rate, with all parameters computed numerically.
\end{abstract}

\noindent\textbf{MSC 2020:} 65L10, 65L11, 65L60, 34E15.\\
\noindent\textbf{Keywords:} spectral collocation, sinh map, interior layer, flame problem, initial value problem, conditioning.

\section{Introduction}

The initial value problem
\begin{equation}\label{eq:fq}
y'(t)=f_q(y):=y^q(1-y),\qquad y(0)=\delta,\qquad q\in\{2,3,\dots\},
\end{equation}
describes ignition: for small $\delta>0$ the solution stays near zero for a time of order $\delta^{1-q}$, jumps to the equilibrium $y=1$ within $O(1)$ time units, and stays there. For $q=2$ it is the flame model used to introduce stiffness in \cite{ShampineGladwellThompson2003,Moler2014}. In chemical kinetics $f_q$ is the rate of the autocatalytic step $\mathrm{A}+q\mathrm{B}\to(q+1)\mathrm{B}$ in a closed system \cite{GrayScott1990}. With $\mu:=(q-1)\delta^{q-1}$ and $x=\mu t-1$, the natural interval $t\in[0,2/\mu]$ becomes $[-1,1]$ and
\begin{equation}\label{eq:rescaled}
\mu\,\frac{\dd y}{\dd x}=y^q(1-y),\qquad y(-1)=\delta,\qquad -1\le x\le1 .
\end{equation}
This is a singularly perturbed problem \cite{OMalley1991} whose solution has an interior layer of width $O(\mu)$ near the middle of the interval. Global spectral collocation on $[-1,1]$ meets two obstacles.
\begin{itemize}
\item \emph{Resolution.} Chebyshev interpolation converges at a rate set by the distance of the nearest complex singularity from $[-1,1]$ \cite{Trefethen2013}. Here that distance is $O(\mu)$, so $O(1/\mu)$ nodes are needed. Coordinate maps can concentrate the nodes in the layer \cite{BaylissTurkel1992,TeeTrefethen2006}, but they need its position and width. Mapped Chebyshev and Laguerre collocation have been used successfully for nonlinear problems with sharp boundary layers \cite{Gheorghiu2016}; there the scaling parameters of the maps are optimised for each problem.
\item \emph{Conditioning.} The solution of \eqref{eq:fq} is extremely sensitive to its initial value: a relative change $\rho$ of $\delta$ moves the front by about $\rho/\delta^{q-1}$ time units, i.e.\ by $(q-1)\rho/\mu$ layer widths. Imposing $y(-1)=\delta$ in a collocation scheme transfers this sensitivity to the discrete nonlinear system. Difficulties of this kind are one reason why general-purpose software prefers time stepping for nonlinear initial value problems: since version 5.1, Chebfun solves them by default with \texttt{ode113} rather than by global spectral collocation, and its guide notes that the collocation approach does not converge for many nonlinear IVPs \cite{ChebfunGuide}. Section~\ref{sec:chebfun} shows this for \eqref{eq:fq}.
\end{itemize}

We remove both obstacles by exploiting the structure of \eqref{eq:fq}. All solutions are translates of one master profile, whose complex singularities we determine exactly (Section~\ref{sec:structure}) and confirm a posteriori from the computed solutions with the AAA algorithm (Section~\ref{sec:aaa}). This fixes the parameters of a sinh map \cite{TeeTrefethen2006} without any adaptivity (Section~\ref{sec:approx}); in \cite{TeeTrefethen2006} the same parameters are estimated a posteriori from the poles of Chebyshev--Pad\'e approximants. The map, and the resulting logarithmic growth of the number of nodes with $1/\mu$, are due to Tee and Trefethen; only the closed-form parameters are specific to \eqref{eq:fq}. The translation property also allows the initial condition to be replaced by an equivalent condition at the front. This turns the initial value problem into an interior-condition problem, which spectral collocation treats in the same way as a boundary value problem, and we show that the right choice of the condition bounds the propagator of the continuous linearized problem by one (Section~\ref{sec:pin}). The best condition point depends on the unknown: for $u=\varphi(y)$ it is the maximiser of $|\varphi'f_q|$ (Theorem~\ref{cor:propagator}). That the discrete problem inherits the bound is not proved here; Lemma~\ref{lem:mode} identifies the discrete mechanism, and the bound is observed in all our experiments. Section~\ref{sec:num} reports results for $q=2$ with $\delta$ down to $10^{-12}$ and for $q=3$ with $\mu$ down to $2\cdot10^{-12}$, and applies the method to an Arrhenius reaction rate, for which the time function and the singularities are only available numerically.

The contributions of the paper are:
\begin{enumerate}
\item an equivalent interior condition chosen jointly with the unknown: for $u=\varphi(y)$ the condition is imposed where $|\varphi'f|$ is maximal, which removes the amplification caused by the initial condition (Proposition~\ref{prop:pin}, Theorem~\ref{cor:propagator}, Lemma~\ref{lem:mode});
\item the exact singularity structure of the master profile (Proposition~\ref{prop:sing}), which gives the parameters of the known sinh map in closed form;
\item numerical evidence in double and $256$-bit arithmetic, including a comparison with Chebfun, an ablation that separates the effects of the map, the condition and an integral (Birkhoff) reformulation, and an Arrhenius rate.
\end{enumerate}

\paragraph{Relation to well-conditioned spectral methods.} The condition numbers of collocation matrices built from differentiation matrices grow like $N^2$, and well-conditioned alternatives replace them by pseudospectral integration matrices or ultraspherical operators \cite{WangSamsonZhao2014,Du2016,OlverTownsend2013}. These methods address the algebraic conditioning of the discretization in $N$. The difficulty here is of another kind: the solution of \eqref{eq:fq} itself depends on $\delta$ with sensitivity of order $1/\mu$ (Remark~\ref{rem:sens}), and every discretization that imposes $y(-1)=\delta$ inherits it. An integral reformulation of the same collocation equations has the same discrete solution, and by the affine invariance of Newton's method it has the same Newton iterates up to the damping and the stopping rule. In our experiments it improves the IVP form by a factor of at most about five (Section~\ref{sec:ablation}). The two approaches are complementary: front pinning can be combined with a well-conditioned discretization.

For a scalar autonomous equation the solution can also be obtained by evaluating the time function and inverting it pointwise; this is how we compute reference solutions. We use \eqref{eq:fq} as a model problem in which both obstacles can be analysed exactly: the singularities of the solution determine the map, and the linearized propagator determines where a condition can be imposed without amplification. The same two questions arise in global collocation for any initial value problem with an interior layer. There the answers have to be estimated rather than computed in closed form, and Section~\ref{sec:general} shows, for an Arrhenius reaction rate, that the quantities needed can be computed numerically when no closed form is available.

\section{Structure of the solution}\label{sec:structure}

Partial fractions give the exact time the solution needs to grow from $\delta$ to $y$:
\begin{equation}\label{eq:Theta}
T_\delta(y)=\int_\delta^y\frac{\dd z}{f_q(z)}=\Theta_q(y)-\Theta_q(\delta),\qquad \Theta_q(y)=\ln\frac{y}{1-y}-\sum_{j=1}^{q-1}\frac{1}{j\,y^j}.
\end{equation}
$\Theta_q$ is increasing from $-\infty$ to $\infty$ on $(0,1)$; let $Y_q:=\Theta_q^{-1}$.

\begin{proposition}[Translation]\label{prop:translation}
The solution of \eqref{eq:fq} is $y_\delta(t)=Y_q\bigl(t+\Theta_q(\delta)\bigr)$. Consequently the solution of \eqref{eq:rescaled} is
\begin{equation}\label{eq:yx}
y(x)=Y_q\Bigl(\frac{x-x_0}{\mu}\Bigr),\qquad x_0:=-\mu\,\Theta_q(\delta)-1 .
\end{equation}
\end{proposition}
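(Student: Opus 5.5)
The plan is separation of variables combined with uniqueness, followed by the change of variables $x=\mu t-1$.

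First, check that $\Theta_q$ is an antiderivative of $1/f_q$ on $(0,1)$. The partial fraction decomposition is
\[
\frac{1}{z^q(1-z)}=\sum_{j=1}^{q}\frac{1}{z^j}+\frac{1}{1-z}.
\]
Term by term, the $j=1$ term and $1/(1-z)$ together integrate to $\ln\frac{z}{1-z}$. For $2\le j\le q$, the term $z^{-j}$ integrates to $-1/((j-1)z^{j-1})$; after reindexing $j-1\to j$ this gives the sum in \eqref{eq:Theta}. Hence $\Theta_q'(y)=1/f_q(y)>0$ on $(0,1)$. Near $0$ the term $-1/((q-1)y^{q-1})$ dominates, and near $1$ the term $-\ln(1-y)$ dominates, so $\Theta_q\to-\infty$ as $y\to0^+$ and $\Theta_q\to+\infty$ as $y\to1^-$. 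Thus $\Theta_q$ is a smooth bijection $(0,1)\to\mathbb{R}$, and $Y_q$ is smooth on all of $\mathbb{R}$. By the inverse function theorem, $Y_q'(s)=1/\Theta_q'(Y_q(s))=f_q(Y_q(s))$.

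Next, set $\tilde y(t):=Y_q(t+\Theta_q(\delta))$ for $t\in\mathbb{R}$. Then $\tilde y'(t)=f_q(\tilde y(t))$ by the identity above, and $\tilde y(0)=Y_q(\Theta_q(\delta))=\delta$. Since $f_q$ is a polynomial, hence locally Lipschitz, Picard--Lindel\"of uniqueness gives $y_\delta=\tilde y$ on the common interval of existence. Because $\tilde y$ is global and takes values in $(0,1)$, the solution exists for all $t$ and equals $\tilde y$ there. This step implicitly assumes $0<\delta<1$, which is the setting of the paper. I would state this assumption explicitly, since it is the only point needing care; the rest is routine.

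Finally, rescale. The solution of \eqref{eq:rescaled} is $y(x)=y_\delta(t)$ with $t=(x+1)/\mu$. The chain rule gives $\mu\,\dd y/\dd x=\dd y_\delta/\dd t$, and $x=-1$ corresponds to $t=0$, so this $y$ solves \eqref{eq:rescaled}; uniqueness transfers in the same way. Then
\[
t+\Theta_q(\delta)=\frac{x+1+\mu\Theta_q(\delta)}{\mu}=\frac{x-x_0}{\mu},
\]
with $x_0=-\mu\Theta_q(\delta)-1$, which is \eqref{eq:yx}.
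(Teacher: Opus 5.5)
Your proof is correct and follows the same route as the paper, whose one-line argument is the separation-of-variables identity $\Theta_q(y_\delta(t))-\Theta_q(\delta)=t$ followed by the same substitution $t=(x+1)/\mu$. You run it in the verification direction: you check that $Y_q(t+\Theta_q(\delta))$ solves the problem and then invoke uniqueness, which makes explicit what the paper leaves implicit (the partial fractions, the bijectivity of $\Theta_q$, global existence, and the assumption $0<\delta<1$).
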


\begin{proof}
$\Theta_q(y_\delta(t))-\Theta_q(\delta)=t$ by \eqref{eq:Theta}; and $t+\Theta_q(\delta)=(x+1)/\mu+\Theta_q(\delta)=(x-x_0)/\mu$.
\end{proof}

Here $x_0$ is the point where $s=0$, i.e.\ where $y=\Theta_q^{-1}(0)$ ($0.782$ for $q=2$, $0.862$ for $q=3$); it is not the pin point $x^\ast$ of Section~\ref{sec:pin}. For $q=2$, $Y_2(s)=1/\bigl(1+\omega(-1-s)\bigr)$ with the Wright $\omega$ function \cite{CorlessJeffrey2002}, and \eqref{eq:yx} is the Lambert-$W$ solution of \cite{Moler2014} written without overflow; $x_0=\delta\ln(1/\delta-1)$. For $q\ge3$ we evaluate $Y_q$ by bisection in the logit variable $\ln(y/(1-y))$ followed by Newton's method. The parameter $\delta$ enters \eqref{eq:yx} only through the position $x_0$ and the width $\mu$: the family is an affine rescaling of a single profile. Since the term $j=q-1$ of the sum in \eqref{eq:Theta} cancels the shift $-1$,
\[
x_0=\mu\Bigl(\ln\frac{1-\delta}{\delta}+\sum_{j=1}^{q-2}\frac{1}{j\,\delta^j}\Bigr).
\]
The front lies slightly to the right of the midpoint, at $x_0=O(\mu\ln(1/\mu))$ for $q=2$. For $q\ge3$ the term $j=q-2$ dominates and $x_0\approx(q-1)\delta/(q-2)$; this is small but corresponds to about $1/(\pi(q-2)\delta^{q-2})$ multiples of the half-width $\pi\mu$ introduced below, e.g.\ $320$ for $q=3$, $\delta=10^{-3}$.

\begin{proposition}[Complex singularities]\label{prop:sing}
Every germ of $Y_q$ can be continued analytically along every path that avoids the points $s_\ast=(2k+1)\pi i$, $k\in\mathbb Z$, and a branch that is singular at $s_\ast$ behaves like $Y_q(s)\sim\bigl(q(s-s_\ast)\bigr)^{-1/q}$. Let $\mathcal D$ be the $s$-plane cut along the rays from $\pi i$ to $+i\infty$ and from $-\pi i$ to $-i\infty$. The branch of $Y_q$ that is real on the real axis is single-valued and analytic in $\mathcal D$, and it is singular at $\pm\pi i$. Hence the solution \eqref{eq:yx} of \eqref{eq:rescaled} is analytic in the $x$-plane cut along the vertical rays from $x_0\pm i\pi\mu$ to $\pm i\infty$, and its nearest singularities are $x_0\pm i\pi\mu$.
\end{proposition}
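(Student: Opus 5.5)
Since $Y_q$ is the inverse of $\Theta_q$, I would study it as the solution of the autonomous equation $Y'=f_q(Y)$ in the complex domain, equivalently through the multivalued function $\Theta_q$ on the Riemann sphere in the $y$-variable. The ODE $\dd s/\dd y=1/f_q(y)$ shows that $\Theta_q$ is locally invertible wherever $f_q(y)\neq0,\infty$. So a germ of $Y_q$ continues along a path in the $s$-plane as long as $Y$ stays in $\mathbb C\setminus\{0,1\}$ and does not escape to $\infty$. The first step is therefore to classify what happens when $Y$ approaches $0$, $1$ or $\infty$ along a finite path in $s$.

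At $y=0$ the pole of order $q-1$ in $\Theta_q$ (leading term $-1/((q-1)y^{q-1})$) forces $s\to\infty$. Likewise, near $y=1$ the logarithm forces $\Re s\to+\infty$. Neither can be reached at finite $s$ (a finite-length path cannot make $|s|$ unbounded).

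At $y=\infty$, set $w=1/y$. Then $1/f_q(y)=-w^{q+1}/(1-w)$ and $\dd y=-\dd w/w^2$, so
\[
\frac{\dd s}{\dd w}=\frac{w^{q-1}}{1-w}=w^{q-1}+O(w^q).
\]
This is analytic at $w=0$, with $s-s_\ast=w^q/q+O(w^{q+1})$. Hence $Y\to\infty$ only at points $s_\ast=\lim_{y\to\infty}\Theta_q(y)$, where the limit is taken along the branch of the logarithm reached. Since $\Theta_q(y)=\ln\frac{y}{1-y}-\sum_j 1/(jy^j)$ and $\frac{y}{1-y}\to-1$, we get $s_\ast=\ln(-1)=(2k+1)\pi i$. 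Inverting $s-s_\ast\sim w^q/q$ gives $w\sim(q(s-s_\ast))^{1/q}$, i.e.\ $Y\sim(q(s-s_\ast))^{-1/q}$, an algebraic branch point of order $q$. This is the local analysis, and it is routine.

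The continuation claim then needs one more ingredient: along a finite path avoiding these points, $Y$ cannot approach the boundary set $\{0,1,\infty\}$ in some oscillating fashion without converging. I would handle this with the standard Painlevé-type argument for first-order autonomous ODEs. The obstruction set consists of limit values of the germ, and $\Theta_q$ is single-valued on a punctured neighbourhood of $\infty$ up to additive multiples of $2\pi i$. So a neighbourhood of $y=\infty$ maps onto neighbourhoods of the $s_\ast$, while neighbourhoods of $0$ and $1$ map to neighbourhoods of $s=\infty$. Hence, if $s$ stays bounded and away from all $s_\ast$, then $Y$ stays in a compact subset of $\mathbb C\setminus\{0,1\}$ and continuation proceeds.

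For the real branch I would show that $\Theta_q$ maps a suitable simply connected region $U$ of the $y$-plane conformally onto $\mathcal D$. The natural candidate for $U$ is the sector-like region bounded by the preimages of the two cuts, together with the strip $|\Im s|<\pi$ as the image of the region where $\arg\frac{y}{1-y}\in(-\pi,\pi)$ adjusted. Concretely:
\begin{itemize}
\item Check that $\Im\Theta_q=\pm\pi$ exactly on the real ray $y>1$ approached from above or below. There $\Theta_q$ is real plus $\mp\pi i$, and it sweeps $\Re s$ over all of $\mathbb R$, since $\Theta_q\to-\infty$ as $y\downarrow1$, $\Theta_q\to0$ as $y\to\infty$, and the sign must be checked.
\item Conclude that the upper half $y$-plane maps into the region above the real $s$-axis, with boundary correspondence giving injectivity by the argument principle.
\end{itemize}
Gluing across the real interval $(0,1)$, which maps to the real $s$-axis, gives the branch on the strip. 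The branch is then extended across the lines $\Im s=\pm\pi$ away from $\pm\pi i$, and finally along the vertical rays' complement.

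I expect this global single-valuedness to be the main obstacle. The first thing I would try is a monodromy argument. $\mathcal D$ is simply connected and contains no singular point, since the points $(2k+1)\pi i$ with $k\ne0,-1$ lie on the cuts. So by the continuation statement and the monodromy theorem, the real germ extends to a single-valued analytic function on $\mathcal D$. This avoids the explicit conformal map entirely.

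Singularity at $\pm\pi i$ follows by continuing the real branch along the imaginary axis. For $y$ on the real branch with $s=it$, $0<t<\pi$, I need $Y$ to tend to $\infty$ as $t\to\pi$. I would verify this via the curve $y\in(1,\infty)$: near $s=\pi i$ the local inverse $w\sim(q(s-\pi i))^{1/q}$ meets the real germ, shown by tracking $\Im\Theta_q$ along the segment. Finally the statement for \eqref{eq:yx} follows from the substitution $s=(x-x_0)/\mu$ of Proposition~\ref{prop:translation}.
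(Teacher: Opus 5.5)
Your local analysis at $0$, $1$ and $\infty$ matches the paper's steps (i)--(iii); the substitution $w=1/y$ with $\dd s/\dd w=w^{q-1}/(1-w)$ is an equivalent form of the paper's expansion in powers of $1/y$. So do your continuation argument and your eventual use of the monodromy theorem on $\mathcal D$. The gap is the claim that the real branch is singular at $\pm\pi i$.

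This does not follow from the continuation statement. $\Theta_q$ also takes the values $(2k+1)\pi i$ at finite points $y\notin\{0,1\}$; for $q=2$ these are $y=1/u$ for the nonzero roots $u$ of $e^{-u}=1-u$. At such points $\Theta_q'=1/f_q\ne0$, so the local inverse is analytic there. You therefore have to show that near $\pi i$ the real branch is one of the local branches $1/w$, $w\sim(q(s-\pi i))^{1/q}$, and not a regular one.

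Your sketch names the right curve, $y\in(1,\infty)$, but does not connect it to the real germ. The computation you prepared for it is also wrong in a way that matters. Approached from the upper half-plane, $\ln\frac{y}{1-y}=\ln\frac{y}{y-1}+\pi i$ on $(1,\infty)$, so $\Theta_q(y)=\pi i+\sum_{m\ge q}\frac{1}{m y^m}$. The sign is $+\pi i$, and the real part decreases from $+\infty$ at $y=1^+$ to $0^+$ as $y\to\infty$. The image is therefore only the half-line $\{\sigma+\pi i:\sigma>0\}$, not the whole line $\Im s=\pi$. This is exactly what produces the singularity: if $(1,\infty)$ swept all real values of $\Re s$, then $\pi i$ would be the image of a finite $y>1$ and the real branch would be analytic there.

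The missing step is the paper's part (iv). Continue the real branch of $\Theta_q$ along a path in the $y$-plane that:
\begin{enumerate}
\item runs along $(0,1)$;
\item passes $y=1$ on a small semicircle in the upper half-plane, where $\Theta_q=-\ln(1-y)+O(1)$ has large positive real part and imaginary part increasing from $0$ to $\pi$;
\item then runs along $(1,\infty)$.
\end{enumerate}
Its image consists of a piece of the real axis, an arc in $\Re s>0$, and the half-line above. So it avoids the cuts and lies in $\mathcal D$. The identity $Y_q(\Theta_q(y))=y$ persists along the path, and the real branch is single-valued on $\mathcal D$. Hence $Y_q(\sigma+\pi i)\in(1,\infty)$ and $Y_q(\sigma+\pi i)\to\infty$ as $\sigma\to0^+$, so $\pi i$ is singular. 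The point $-\pi i$ follows from $Y_q(\bar s)=\overline{Y_q(s)}$. Your limit along the imaginary axis then also holds, because on a slit disc at $\pi i$ the real branch coincides with this singular local branch, but it is not needed.

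A smaller point, which the paper's phrase ``on every branch'' shares: the claim that neighbourhoods of $0$ map to neighbourhoods of $s=\infty$ is true for each fixed branch but not uniformly over branches. For $q=2$ and $y=re^{i\phi}$ with $\cos\phi=r\ln r$, one gets $\Re\Theta_2(y)=-\ln|1-y|$, so a suitable branch is bounded at points arbitrarily close to $0$. The claim therefore does not by itself exclude a continued curve that spirals into $0$. The length estimate does: along a rectifiable path the $s$-length equals $\int|\dd Y|/|f_q(Y)|$. The integrals $\int_0\dd\rho/\rho^q$ and $\int_0\dd\rho/\rho$ diverge (at $y=0$ and $y=1$), whereas $\int^\infty\dd\rho/\rho^{q+1}$ converges.
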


\begin{proof}
(i) $\Theta_q'=1/f_q$ is finite and nonzero at every finite $y\notin\{0,1\}$, so a branch of $Y_q$ can be continued along a path as long as its values stay in a compact subset of $\mathbb C\setminus\{0,1\}$. At $y=0$ the function $\Theta_q$ has a pole of order $q-1$ and at $y=1$ a logarithmic singularity, so $|\Theta_q(y)|\to\infty$ as $y\to0$ or $y\to1$, on every branch. Hence, if the continuation breaks down at a finite point $s_0$ of a path, then $Y_q(s)\to\infty$ as $s\to s_0$: a finite accumulation point $y_1\notin\{0,1\}$ of $Y_q(s)$ would allow the local inverse of $\Theta_q$ near $y_1$ to continue the germ past $s_0$, and $0$ and $1$ cannot be accumulation points because $|\Theta_q|\to\infty$ there.

(ii) For $|y|>2$ we have $y/(1-y)=-1/(1-1/y)$ with $|1/y|<\tfrac12$, so $y/(1-y)$ lies in the half-plane $\Re w<0$, away from $0$. On the connected set $|y|>2$ every branch of the logarithm is therefore single-valued, and the branches, indexed by $k\in\mathbb Z$, are
\[
\ln\frac{y}{1-y}=(2k+1)\pi i-\ln\Bigl(1-\frac1y\Bigr)=(2k+1)\pi i+\sum_{m\ge1}\frac{1}{m\,y^m},
\]
so the terms $m<q$ cancel against the sum in \eqref{eq:Theta}, and $\Theta_q(y)=(2k+1)\pi i+\frac{1}{q\,y^q}+O(y^{-q-1})$. Thus $s_0$ in (i) is one of the points $(2k+1)\pi i$, these are the only finite asymptotic values of $\Theta_q$, and inverting gives the stated behaviour.

(iii) $\mathcal D$ is simply connected, contains none of these points, and contains the real axis, where the real branch is defined. By (i) and (ii) its germ can be continued along every path in $\mathcal D$, so by the monodromy theorem it defines a single-valued analytic function on $\mathcal D$.

(iv) For $y\in(1,\infty)$ approached from the upper half-plane, $\ln\frac{y}{1-y}=\ln\frac{y}{y-1}+\pi i$, so $\Theta_q(y)=\pi i+\sum_{m\ge q}\frac{1}{m\,y^m}$. Hence $\Theta_q$ maps the path from $(0,1)$ around $y=1$ through the upper half-plane to $(1,\infty)$ onto a curve in $\mathcal D$ that ends with the half-line $\{\sigma+\pi i:\sigma>0\}$, and $y\to\infty$ as $\sigma\to0^+$. So $Y_q(\sigma+\pi i)\in(1,\infty)$ tends to $\infty$ as $\sigma\to0^+$, and $\pi i$ is a singular point of the real branch; by the symmetry $Y_q(\bar s)=\overline{Y_q(s)}$, so is $-\pi i$.
\end{proof}

For $q=2$ the proposition can also be read off from the Wright $\omega$ form of $Y_2$. Since $\omega+\ln\omega=z$, the derivative $\omega'(z)=\omega/(1+\omega)$ becomes infinite where $\omega=-1$, i.e.\ at $z=-1\pm\pi i$, and these are the branch points of the principal branch of $\omega$ \cite{CorlessJeffrey2002}. There $1+\omega$ vanishes like $\sqrt{-2(z-z_\ast)}$, so $Y_2(s)=1/(1+\omega(-1-s))$ has the singularities $s=\mp\pi i$ with $Y_2\sim\bigl(2(s\pm\pi i)\bigr)^{-1/2}$. The cuts of the principal branch of $\omega$ correspond to the horizontal rays from $\pm\pi i$ to $\pm\pi i+\infty$ in the $s$-plane, a different choice of cuts than $\mathcal D$; the two branches coincide in the strip $|\Im s|<\pi$.

\begin{remark}[Location of the singularities]\label{rem:residue}
The value $\pi$ has a simple interpretation, which carries over to other right-hand sides. Let $f$ be analytic, with $f(1)=0>f'(1)$, $1/f(z)=o(z^{-1})$ as $|z|\to\infty$, and no zeros in the quarter-plane $\Re z>\tfrac12$, $\Im z\ge0$ other than $z=1$, and let $\Theta(y)=\int_{1/2}^y\dd z/f(z)$. The singular value of the inverse reached along the vertical path from $\tfrac12$ is $s_\ast=\int_{1/2}^{1/2+i\infty}\dd z/f(z)$. Deforming the vertical path to the half-line $[\tfrac12,\infty)$, indented above $z=1$, the arc at infinity contributes nothing and the indentation contributes $-\pi i\operatorname{Res}_{z=1}(1/f)=-\pi i/f'(1)$. Hence
\begin{equation}\label{eq:residue}
\Im s_\ast=\frac{\pi}{|f'(1)|},\qquad \Re s_\ast=\mathrm{p.v.}\!\int_{1/2}^{\infty}\frac{\dd z}{f(z)}.
\end{equation}
When $s_\ast$ is the nearest singularity of the real branch, as it is for $f_q$ by Proposition~\ref{prop:sing}, the half-width of the analyticity strip is thus fixed by the relaxation rate $|f'(1)|$ at the stable equilibrium; for $f_q$ it is $\pi$. For a general $f$ the contour computation alone does not exclude other singular values closer to the real axis; this needs a global argument as in Proposition~\ref{prop:sing}, or numerical evidence. For $f_q$ the other singular values, on other sheets, are the points $(2k+1)\pi i$ of Proposition~\ref{prop:sing}. If $1/f$ does not decay faster than $1/z$, the singularity can instead be an asymptotic value of $\Theta$ at another singularity of $f$; Section~\ref{sec:general} gives an example.
\end{remark}

\begin{remark}[Movable singularities]\label{rem:movable}
The equation $y'=f_q(y)$ is autonomous and has no fixed singularities: all singularities of a solution depend on its initial value, i.e.\ they are movable \cite{Hille1976}. By Propositions~\ref{prop:translation} and~\ref{prop:sing}, in the original time variable they lie at
\[
t_\ast(\delta)=-\Theta_q(\delta)\pm(2k+1)\pi i,\qquad -\Theta_q(\delta)\approx\frac1\mu .
\]
As $\delta$ decreases they move with the transition zone, parallel to the real axis. Their distance from the initial point $t=0$, which is the radius of convergence of the Taylor series of $y_\delta$ at $t=0$, grows like $1/\mu$, whereas their distance from the real axis is $\pi$ for every $\delta$. After the rescaling to $[-1,1]$ this distance becomes $\pi\mu$, which is what makes the problem hard for polynomial approximation. Since the singularities are branch points and not poles, the equation does not have the Painlevé property. We confirmed these locations in two independent ways. Padé approximants of type $[40/40]$ of the Taylor series at $t=0$, computed from the initial value alone, give $\Re t_\ast$ with a relative error below $7\cdot10^{-4}$ and $\Im t_\ast$ between $2.70$ and $3.14$ ($q=2$, $\delta=10^{-1},10^{-2},10^{-3}$; $q=3$, $\delta=10^{-1},3\cdot10^{-2}$). AAA approximation of the computed collocation solutions (Section~\ref{sec:aaa}), sampled in $t$, gives $\Re t_\ast$ to within $5\cdot10^{-3}$ and $\Im t_\ast$ between $3.18$ and $3.20$ for $\Re t_\ast$ from $12$ to $10^6$. In summary, a smaller $\delta$ does not change the singularities in the variable $s$ of the master profile, where they stay at $\pm(2k+1)\pi i$ with the same type for every $\delta$. In $t$ they move with the transition zone, parallel to the real axis and at distance $\pi$ from it; in $x$ they approach the interval, at distance $\pi\mu$; and after the sinh map of Section~\ref{sec:approx} their distance from $[-1,1]$ is about $\pi/(2L)$, $L\approx\ln(2/(\pi\mu))$, so it shrinks only logarithmically.
\end{remark}

\begin{remark}[Sensitivity]\label{rem:sens}
By Proposition~\ref{prop:translation}, replacing $\delta$ by $\delta(1+\rho)$ in the initial condition (with the equation unchanged) translates the solution by $T_\delta(\delta(1+\rho))=\rho/\bigl(\delta^{q-1}(1-\delta)\bigr)+O(\rho^2\delta^{1-q})$ time units, for small $\rho$ and $\delta$ bounded away from $1$, i.e.\ by about $(q-1)\rho/\mu$ layer widths. Any formulation that imposes $y(-1)=\delta$ inherits this amplification factor $\approx1/\mu$.
\end{remark}

\paragraph{Equilibria and phase line.} The equation $y'=f_q(y)$ has the equilibria $y=0$ and $y=1$ (Fig.~\ref{fig:phaseline}). Since $f_q'(1)=-1$, the equilibrium $y=1$ is hyperbolic and asymptotically stable \cite{Strogatz2015}; perturbations decay like $e^{-t}$, i.e.\ like $e^{-(x-x_0)/\mu}$ after the front, and this decay is the stiffness of the equation after ignition. At $y=0$ the linearization gives $f_q'(0)=0$ for $q\ge2$: the equilibrium is not hyperbolic, and its stability is decided by the leading nonlinear term, $y'\approx y^q$. Solutions with $y(0)=\delta>0$ leave it algebraically, $y(t)\approx\bigl(\delta^{1-q}-(q-1)t\bigr)^{-1/(q-1)}$, and need a time of about $1/((q-1)\delta^{q-1})=1/\mu$ to do so; for $y<0$ they approach $0$ if $q$ is even ($y=0$ is semistable) and leave it if $q$ is odd. The long slow phase and the scale $\mu$ are thus consequences of this degeneracy: for a hyperbolic unstable equilibrium ($q=1$, the logistic equation) the escape time would only be of order $\ln(1/\delta)$. Since the phase space is the line, the interval $(0,1)$ is a single heteroclinic orbit from $0$ to $1$, and every solution with $\delta\in(0,1)$ moves along it; this is the dynamical content of Proposition~\ref{prop:translation}. The conditioning of the collocation problem is governed by the linear stability along this orbit rather than at the equilibria. It is determined by the function $f_q(y)/y$, also shown in Fig.~\ref{fig:phaseline}, whose maximiser is the pin value $y^\ast$ of Section~\ref{sec:pin}.

\begin{remark}[The logistic case $q=1$]\label{rem:logistic}
The logistic equation $y'=y(1-y)$ is the member $q=1$ of the family, and it shows that both difficulties are caused by the degeneracy $f_q'(0)=0$, not by the stable equilibrium $y=1$, which is the same for all $q$. Here $\Theta_1(y)=\ln(y/(1-y))$, the master profile is $Y_1(s)=1/(1+e^{-s})$, and the first statement of Proposition~\ref{prop:translation} and Proposition~\ref{prop:sing} with its proof remain valid, with a logarithmic singularity of $\Theta_1$ at $y=0$. The singularities are again at $s=\pm(2k+1)\pi i$, in agreement with Remark~\ref{rem:residue}, but for $q=1$ they are simple poles; the logistic equation is a Riccati equation and has the Painlev\'e property. Both obstacles of Section~1 disappear.
\begin{itemize}
\item The transition is reached after $\Theta_1(\tfrac12)-\Theta_1(\delta)\approx\ln(1/\delta)$ time units, and it lasts $O(1)$ time units; the scale $\mu=(q-1)\delta^{q-1}$ degenerates. On an interval of length of order $\ln(1/\delta)$, the transition zone occupies a fraction of order $1/\ln(1/\delta)$, so it thins only logarithmically as $\delta$ decreases.
\item A relative change $\rho$ of $\delta$ moves the transition by only $T_\delta(\delta(1+\rho))\approx\rho$ time units, instead of $\rho/\delta^{q-1}$ (Remark~\ref{rem:sens}). In the variable $v=\ln y$ the propagator of Proposition~\ref{prop:pin} is governed by $Q(y)=f_1(y)/y=1-y$, which is decreasing on $[0,1]$. On the range $[\delta,1)$ of the solution it is maximal at the initial value $\delta$ (the value $(q-1)/q=0$ is the end point of the orbit at $t=-\infty$), so the initial condition is already the optimal condition, and pinning gains nothing.
\end{itemize}
\end{remark}

\begin{figure}[t]
\centering
\includegraphics[width=0.9\textwidth]{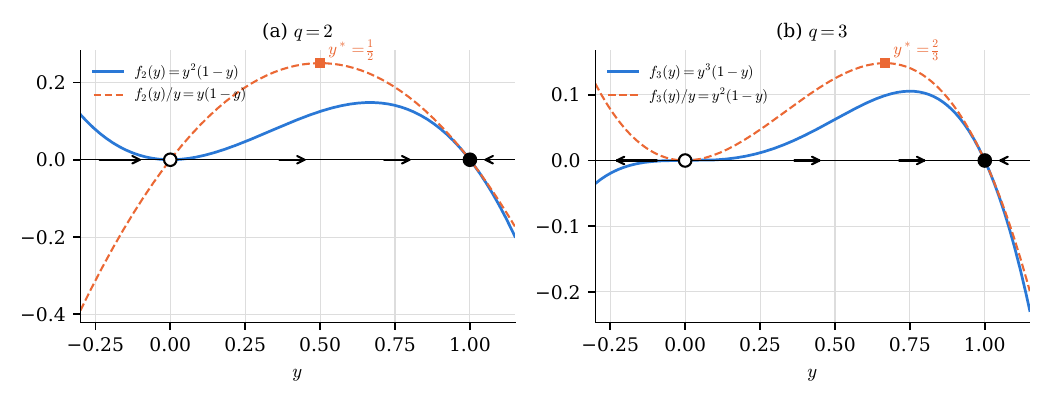}
\caption{Phase line of $y'=f_q(y)$ for $q=2,3$: the degenerate equilibrium $y=0$ (open circle), the stable equilibrium $y=1$ (filled circle), the direction of motion (arrows), and $f_q(y)/y$ (dashed), whose maximiser $y^\ast=(q-1)/q$ is the pin value.}
\label{fig:phaseline}
\end{figure}

\section{Approximability and the sinh map}\label{sec:approx}

Chebyshev interpolation of a function analytic inside the Bernstein ellipse $E_\rho$ converges like $\rho^{-N}$ \cite{Trefethen2013}. By Proposition~\ref{prop:sing}, the nearest singularities of $y$ lie at distance $\pi\mu$ from the real axis close to the midpoint, so $\rho\approx1+\pi\mu$ and the error behaves like $e^{-\pi\mu N}$. Accuracy $10^{-10}$ requires $N\approx23/(\pi\mu)$ nodes: $733$ for $\mu=10^{-2}$ and about $7300$ for $\mu=10^{-3}$.

Because the location and the distance of the singularities are known exactly, we use the sinh map of Tee and Trefethen \cite{TeeTrefethen2006},
\begin{equation}\label{eq:sinh}
x=g(\xi)=x_0+\eps\sinh(L\xi+M),\qquad L=\tfrac12(s_+-s_-),\ M=\tfrac12(s_++s_-),\ s_\pm=\operatorname{arcsinh}\frac{\pm1-x_0}{\eps},
\end{equation}
with $x_0$ from \eqref{eq:yx} and $\eps=\pi\mu$, and collocate at the images of the $N+1$ Chebyshev--Lobatto points $\xi_j$, using the standard differentiation matrix \cite{Trefethen2000} scaled by $1/g'(\xi_j)$. In the variable $s=(x-x_0)/\mu$ of the master profile the nodes are $s_j=\pi\sinh(L\xi_j+M)$, which we compute directly, so that the exact solution can be evaluated without cancellation. The singularities $x_0\pm i\eps$ are mapped to $\xi=(-M\pm i\pi/2)/L$, at distance $\pi/(2L)$ from $[-1,1]$. More precisely, $g$ maps the strip $|\Im\xi|<\pi/(2L)$ conformally onto the $x$-plane cut along the vertical rays from $x_0\pm i\eps$ to $\pm i\infty$. By Proposition~\ref{prop:sing}, $y$ is analytic in this cut plane, so $y\circ g$ is analytic in the whole strip, and its singularities nearest to $[-1,1]$ are $z_\ast,\bar z_\ast$ with $z_\ast=(-M+i\pi/2)/L$. The scheme of Section~\ref{sec:pin} interpolates $v=\ln y$ rather than $y$. By the proof of Proposition~\ref{prop:sing}, $Y_q(s)\to0$ only as $|s|\to\infty$, so $Y_q$ has no zeros, and $\ln Y_q$ is analytic in every simply connected domain in which $Y_q$ is, in particular in the strip $|\Im s|<\pi$ and in the cut plane above. At $s_\ast$ it has the logarithmic branch points $\ln Y_q(s)\sim-\frac1q\ln\bigl(q(s-s_\ast)\bigr)$. The change of unknown changes the type of the singularities but not their location, and the following applies to $v\circ g$ as well as to $y\circ g$. Chebyshev interpolation in $\xi$ therefore converges like $\rho_\ast^{-N}$, $\rho_\ast=|z_\ast+(z_\ast^2-1)^{1/2}|>1$ (with the branch of the square root that gives the larger modulus), up to algebraic factors that depend on the type of the singularity. For $M=0$, $\ln\rho_\ast=\operatorname{arcsinh}(\pi/(2L))$; since $|M|$ is small and $L$ large, $\ln\rho_\ast\approx\pi/(2L)$. For $q=2$, $\delta=10^{-2}$, $\ln\rho_\ast=0.3698$ and $\pi/(2L)=0.3783$, and for $\mu\le2\cdot10^{-4}$ the two differ by less than $0.7\%$. The predicted convergence rate is thus
\begin{equation}\label{eq:rate}
\text{error}\approx C\,e^{-\pi N/(2L)},\qquad L\approx\ln\frac{2}{\pi\mu},
\end{equation}
and the number of nodes for a fixed accuracy is predicted to grow only like $\ln(1/\mu)$; for accuracy $10^{-10}$, $N\approx\frac{2L}{\pi}\ln10^{10}$. Fig.~\ref{fig:grid}(a) shows how the nodes resolve the front for $q=2$, $\delta=10^{-6}$ and $N=128$.

The map must be centred correctly: the sinh map concentrates the nodes within a few multiples of $\eps$ of $x_0$. For $q=2$, centring it at the midpoint $x_0=0$ instead of the exact $x_0=\delta\ln(1/\delta-1)$ is an offset of only $2.9$, $4.4$ and $5.9$ multiples of $\eps$ for $\delta=10^{-4},10^{-6},10^{-8}$. Even so, it degrades the interpolation error at $N=512$ from machine precision to $1.2\cdot10^{-9}$, $5.3\cdot10^{-5}$ and $2.3\cdot10^{-3}$. For $q\ge3$ the offset is hundreds or thousands of multiples of $\eps$ (Section~\ref{sec:structure}), and a map centred at the midpoint does not resolve the front at all.

\begin{figure}[t]
\centering
\includegraphics[width=\textwidth]{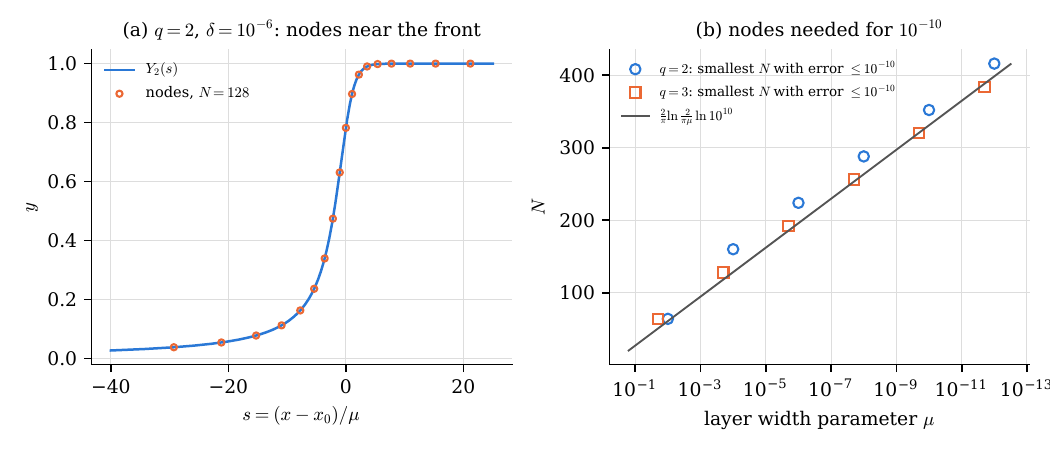}
\caption{(a) Master profile $Y_2$ and the pinned collocation solution at the sinh-mapped nodes lying in $s\in[-40,25]$ ($q=2$, $\delta=10^{-6}$, $N=128$; $s=(x-x_0)/\mu$, $\mu=\delta$). (b) Smallest $N\in\{32,64,\dots,512\}$ for which the pinned solution has maximal error $\le10^{-10}$, versus the layer width $\mu$ ($\mu=\delta$ for $q=2$, $\mu=2\delta^2$ for $q=3$); line: the prediction from \eqref{eq:rate}.}
\label{fig:grid}
\end{figure}

\section{Initial condition versus front pinning}\label{sec:pin}

We solve for $v=\ln y$, which satisfies
\begin{equation}\label{eq:logvar}
\mu\,v'(x)=e^{(q-1)v}-e^{qv}.
\end{equation}
In the slow phase $y$ varies over many orders of magnitude while $v$ varies only logarithmically, and relative errors in $y$ become absolute errors in $v$. The phase line of \eqref{eq:logvar} keeps only one of the two equilibria of Fig.~\ref{fig:phaseline}. The right-hand side $F_q(v)=e^{(q-1)v}-e^{qv}$ vanishes only at $v=0$, with $F_q'(0)=-1$, so the stable equilibrium $y=1$ keeps its rate. The degenerate equilibrium $y=0$ is moved to $v=-\infty$, and the slow phase becomes a drift with speed $F_q(v)\approx e^{(q-1)v}$; it takes $\int_{\ln\delta}^{0}e^{-(q-1)v}\,\dd v\approx1/\mu$ time units, as in Section~\ref{sec:structure}. The collocation equations are solved by damped Newton iteration, as is usual for nonlinear spectral collocation \cite{Boyd2001}: the step length is halved, down to $10^{-3}$, until the Euclidean norm of the residual decreases. The initial guess is the generic profile
\[
y^{(0)}=\frac{1}{1+\bigl((q-1)\,\mathrm{softplus}(s_0-s)\bigr)^{1/(q-1)}},\qquad \mathrm{softplus}(z)=\ln(1+e^{z}),
\]
in the variable $s=(x-x_0)/\mu$ of \eqref{eq:yx}. It has the correct algebraic left tail $y\sim(-(q-1)s)^{-1/(q-1)}$, saturates at $1$, and $s_0$ is chosen so that it equals the pin value $y^\ast$ below at the pin point $s=\Theta_q(y^\ast)$. Two formulations are compared.
\begin{itemize}
\item \emph{IVP form}: \eqref{eq:logvar} collocated at $\xi_0,\dots,\xi_{N-1}$, plus $v(-1)=\ln\delta$.
\item \emph{Pinned form}: \eqref{eq:logvar} collocated at the same points, plus the interior condition $v(x^\ast)=\ln y^\ast$, imposed through the barycentric interpolation row \cite{BerrutTrefethen2004} at $\xi^\ast=g^{-1}(x^\ast)$. Here $y^\ast=(q-1)/q$ and $x^\ast=x_0+\mu\,\Theta_q(y^\ast)$.
\end{itemize}
By Proposition~\ref{prop:translation} both conditions select the same solution of the differential equation, and $x^\ast$ is known in closed form. Their conditioning is very different. Interior conditions that remove a translation invariance are standard in the computation of travelling waves and connecting orbits, where a phase condition fixes the otherwise free shift \cite{Beyn1990,BeynThummler2004}. Here the problem has no free shift; the interior condition replaces a given initial condition by an equivalent one, and its position is chosen to make the linearized problem well conditioned.

\begin{proposition}\label{prop:pin}
Let $Q(t)=f_q(y_\delta(t))/y_\delta(t)=y_\delta^{q-1}(1-y_\delta)$. The linearization of $v'=e^{(q-1)v}-e^{qv}$, written in the time variable $t$ ($\mu\,\dd/\dd x=\dd/\dd t$), about $v=\ln y_\delta$ is $w'=\bigl((q-1)y^{q-1}-qy^q\bigr)w+r$, whose propagator, the pointwise amplification factor from $s$ to $t$, is $P(t,s)=Q(t)/Q(s)$.
\begin{enumerate}
\item For the IVP form, $w(t)=\int_0^tP(t,s)r(s)\,\dd s$. At the front, $P(t^\ast,0)=Q(y^\ast)/Q(\delta)\approx\frac{1}{q}\bigl(\frac{q-1}{q}\bigr)^{q-1}\delta^{1-q}$, which equals $1/(4\delta)$ for $q=2$.
\item For the pinned form with the pin value perturbed by $\eta$, $w(t)=P(t,t^\ast)\eta+\int_{t^\ast}^{t}P(t,s)r(s)\,\dd s$ with $|P(t,s)|\le1$ for all $s$ between $t^\ast$ and $t$, so $|w(t)|\le|\eta|+\bigl|\int_{t^\ast}^t|r|\bigr|$.
\end{enumerate}
\end{proposition}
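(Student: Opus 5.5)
The plan is to compute the linearization directly, identify its coefficient as a logarithmic derivative of $Q$, and then read off both representations from the variation-of-constants formula. With $v=\ln y$ in the time variable, the right-hand side is $F_q(v)=e^{(q-1)v}-e^{qv}$, so $F_q'(v)=(q-1)e^{(q-1)v}-qe^{qv}$. Evaluated at $v=\ln y_\delta$ this is $a(t):=(q-1)y^{q-1}-qy^q$, which gives the stated linear equation $w'=a(t)w+r$. The key observation is that $a=\frac{\dd}{\dd t}\ln Q(t)$. Indeed, $\ln Q=(q-1)\ln y+\ln(1-y)$, and $y'=y^q(1-y)$, so
\[
\frac{\dd}{\dd t}\ln Q=\Bigl(\frac{q-1}{y}-\frac{1}{1-y}\Bigr)y^q(1-y)=(q-1)y^{q-1}(1-y)-y^q=a(t).
\]
Hence the fundamental solution is $\exp\int_s^t a=Q(t)/Q(s)$. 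This is the pointwise amplification factor $P(t,s)$, and variation of constants gives $w(t)=P(t,t_0)w(t_0)+\int_{t_0}^tP(t,s)r(s)\,\dd s$ for any anchor $t_0$.

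For part 1, I take $t_0=0$ with $w(0)=0$; the condition is imposed exactly and only the residual $r$ perturbs the solution. At the front, $Q(y^\ast)=y^{\ast\,q-1}(1-y^\ast)=\frac1q\bigl(\frac{q-1}{q}\bigr)^{q-1}$ with $y^\ast=(q-1)/q$. Also $Q(\delta)=\delta^{q-1}(1-\delta)\approx\delta^{q-1}$. Taking the ratio gives the claimed value, and for $q=2$ it is $\frac14\cdot\frac1{\delta(1-\delta)}\approx1/(4\delta)$.

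For part 2, I take $t_0=t^\ast$ with $w(t^\ast)=\eta$. The bound needs $|P(t,s)|\le1$ for $s$ between $t^\ast$ and $t$. Here I use the monotonicity of $y_\delta$ in $t$, which holds because $y'>0$ on $(0,1)$, together with the fact that $Q$, viewed as a function of $y$, namely $h(y)=y^{q-1}(1-y)$, is unimodal on $[0,1]$. Since $h'(y)=y^{q-2}\bigl((q-1)-qy\bigr)$, $h$ increases up to $y^\ast$ and decreases afterwards.

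The case analysis is as follows:
\begin{itemize}
\item If $t\ge s\ge t^\ast$, then $y(t)\ge y(s)\ge y^\ast$, so $Q(t)\le Q(s)$.
\item If $t\le s\le t^\ast$, then $y(t)\le y(s)\le y^\ast$, so again $Q(t)\le Q(s)$.
\end{itemize}
In both cases $0<P(t,s)\le1$, and this also covers $P(t,t^\ast)\le1$. Applying the triangle inequality to the representation then gives $|w(t)|\le|\eta|+\bigl|\int_{t^\ast}^t|r|\bigr|$.

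The main obstacle is spotting that the linearization coefficient is exactly $(\ln Q)'$ along the orbit; once that is in hand, everything else is monotonicity. The one point to check carefully is that the ordering of $t$ and $s$ relative to $t^\ast$ matches the side of the maximum of $h$, so that the propagator always runs away from the maximizer of $Q$. This is exactly where the choice $y(t^\ast)=y^\ast$ enters.
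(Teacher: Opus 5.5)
Your proposal is correct and follows the paper's proof. The key step in both is that the linearization coefficient $(q-1)y^{q-1}-qy^q$ equals $\frac{\dd}{\dd t}\ln Q$ along the orbit, after which the bound follows from the monotonicity of $y_\delta$ and the unimodality of $y\mapsto y^{q-1}(1-y)$ with maximum at $y^\ast=(q-1)/q$; you also spell out the variation-of-constants representations and the two-sided case analysis that the paper leaves implicit.
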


\begin{proof}
$\frac{\dd}{\dd t}\ln Q=(q-1)\frac{y'}{y}-\frac{y'}{1-y}=(q-1)y^{q-1}(1-y)-y^q=(q-1)y^{q-1}-qy^q$, which is the coefficient of the linearization. Since $y\mapsto y^{q-1}(1-y)$ attains its maximum on $[0,1]$ at $y^\ast=(q-1)/q$, $Q$ increases up to $t^\ast$ and decreases afterwards. Hence $Q(t)\le Q(s)$ whenever $s$ lies between $t^\ast$ and $t$.
\end{proof}

Proposition~\ref{prop:pin} concerns the continuous linearized problem. It does not bound the inverse of the collocation matrix uniformly in $\mu$, and for $r$ measured in the maximum norm the integral still grows with the length of the interval, which is of order $1/\mu$ in $t$. What it shows is that no perturbation is amplified on its way to the transition zone. That the discrete problem behaves in the same way is observed in Section~\ref{sec:num}, where the condition numbers of the pinned form after row scaling are bounded independently of $\delta$; it is not proved here.

Since $Q=f_q(y_\delta)/y_\delta=v'$, the propagator is the ratio of two values of $v'$. The same holds for every choice of the unknown.

\begin{theorem}[Choice of the condition point]\label{cor:propagator}
Let $f$ be $C^1$ on an open interval $I$ with $f>0$ on $I$, let $y$ solve $y'=f(y)$ with values in $I$ on $[0,T]$ (so $y$ is increasing), and let $\varphi$ be $C^2$ on $I$ with $\varphi'\ne0$. Then $u=\varphi(y)$ satisfies $u'=F(u)$ with $F=(\varphi'f)\circ\varphi^{-1}$, and the linearization $w'=F'(u(t))\,w+r$ has the propagator
\[
P(t,s)=\frac{u'(t)}{u'(s)}=\frac{h(y(t))}{h(y(s))},\qquad h=\varphi'f .
\]
Assume that $|h|$ is nondecreasing on $y([0,T])$ up to a point $y_\varphi$ and nonincreasing after it ($y_\varphi$ may be an end point; if $|h|$ is maximal on an interval, any point of it can be taken).
\begin{enumerate}
\item[(a)] For the condition imposed at $t_\varphi$ with $y(t_\varphi)=y_\varphi$, $|P(t,s)|\le1$ for all $s$ between $t_\varphi$ and $t$.
\item[(b)] For the condition imposed instead at $t_p$ with $|h(y(t_p))|\ge\theta\max_{z\in y([0,T])}|h(z)|$, $0<\theta\le1$, $|P(t,s)|\le1/\theta$ for all $s$ between $t_p$ and $t$.
\end{enumerate}
\end{theorem}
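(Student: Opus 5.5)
The proof follows the pattern of Proposition~\ref{prop:pin}: first identify the propagator as a ratio of values of $u'$, then use the unimodality of $|h|$ along the monotone orbit.

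\emph{Step 1: the equation for $u$.} Since $f>0$, the solution $y$ is strictly increasing. Since $\varphi'\ne0$ on the interval $I$, $\varphi$ is strictly monotone, so $\varphi^{-1}$ exists and is $C^2$ on $\varphi(I)$. By the chain rule,
\[
u'=\varphi'(y)\,f(y)=h(y)=h\bigl(\varphi^{-1}(u)\bigr)=F(u).
\]
Here $F$ is $C^1$ because $h=\varphi'f$ is $C^1$ and $\varphi^{-1}$ is $C^1$.

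\emph{Step 2: the propagator.} Differentiating $u'(t)=F(u(t))$ once more, which is legitimate since $F\in C^1$, gives $(u')'=F'(u(t))\,u'$. So $u'$ solves the homogeneous linearized equation. Because $u'=h(y)=\varphi'(y)f(y)\ne0$ on $[0,T]$, it is a nonvanishing fundamental solution of this scalar linear equation. The propagator is therefore
\[
P(t,s)=\frac{u'(t)}{u'(s)}=\frac{h(y(t))}{h(y(s))}.
\]
Equivalently, $\frac{\dd}{\dd t}\ln|u'|=F'(u)$, exactly as in the computation for $Q$ in Proposition~\ref{prop:pin}. Variation of constants then gives
\[
w(t)=P(t,t_c)\,w(t_c)+\int_{t_c}^tP(t,s)\,r(s)\,\dd s
\]
for any condition point $t_c$.

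\emph{Step 3: the bounds.} Since $y$ is increasing, the map $t\mapsto|h(y(t))|$ inherits the unimodality of $|h|$. It is nondecreasing for $t\le t_\varphi$ and nonincreasing for $t\ge t_\varphi$. If $y_\varphi$ is an endpoint, one of these ranges is empty or degenerate. If the maximum is attained on a plateau, every point of the plateau is a maximiser.

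For (a), take $s$ between $t_\varphi$ and $t$. If $t\ge t_\varphi$, then $t_\varphi\le s\le t$ lies in the nonincreasing range, so $|h(y(t))|\le|h(y(s))|$. The case $t\le t_\varphi$ is symmetric, using the nondecreasing range. Either way $|P(t,s)|\le1$.

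For (b), take $s$ between $t_p$ and $t$. Write $m(\tau)=|h(y(\tau))|$ and $m_{\max}=\max m$. If $s$ lies on the same side of $t_\varphi$ as $t$, or between $t$ and $t_\varphi$, the argument of (a) still gives $m(t)\le m(s)$, hence $|P|\le1\le1/\theta$. Otherwise $s$ lies between $t_p$ and $t_\varphi$ on the side facing away from $t$. By monotonicity on that side, $m(s)\ge m(t_p)\ge\theta\, m_{\max}\ge\theta\, m(t)$, so $|P(t,s)|\le1/\theta$.

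The main obstacle is this case analysis in (b), in particular the position of $t_p$ relative to $t_\varphi$ and $t$. I will handle it with one lemma: for $\tau$ between $a$ and $b$,
\[
m(\tau)\ge\min\{m(a),m(b)\}.
\]
This holds because a unimodal function has no interior strict minimum. Applied with $a=t_p$ and $b=t$, it gives
\[
m(s)\ge\min\{m(t_p),m(t)\}\ge\min\{\theta\, m_{\max},m(t)\}\ge\theta\, m(t).
\]
This settles (b) at once, and (a) is the special case $\theta=1$, $t_p=t_\varphi$.
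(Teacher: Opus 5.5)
Your proof is correct and follows the paper's argument: $u'$ is a nonvanishing solution of the homogeneous linearized equation, and your lemma is the paper's key inequality $|h(y(s))|\ge\min\{|h(y(t_p))|,|h(y(t))|\}$ for $s$ between $t_p$ and $t$, with (a) as the case $\theta=1$. Rely on that lemma rather than on your preliminary case split, which is misworded: for $t_\varphi<t<s<t_p$ one has $m(s)\le m(t)$, and the bound there comes from $m(s)\ge m(t_p)\ge\theta\,m(t)$ instead.
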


\begin{proof}
Differentiating $u'=F(u)$ gives $(u')'=F'(u)\,u'$, so $u'$ solves the homogeneous linearized equation, and $u'=h(y)\ne0$ because $f\ne0$ on $I$. In the scalar case every solution is a multiple of $u'$, which gives $P$. Since $y$ is increasing, $y(s)$ lies between $y(t_p)$ and $y(t)$, and by the monotonicity of $|h|$ on either side of $y_\varphi$, $|h(y(s))|\ge\min\{|h(y(t_p))|,|h(y(t))|\}$. Hence $|P(t,s)|\le\max\{1,|h(y(t))|/|h(y(t_p))|\}\le\max\{1,1/\theta\}$, which is (b), and (a) is the case $\theta=1$.
\end{proof}

Proposition~\ref{prop:pin} is the case $f=f_q$, $I=(0,1)$, $\varphi=\ln$. By (b), a condition at a point where $Q$ is a fraction $\theta$ of its maximum amplifies by at most $1/\theta$, so the pin need not be placed exactly; Section~\ref{sec:ablation} confirms this. For the unknowns compared below:
\begin{center}
\begin{tabular}{llll}
\hline
unknown $u$ & $h=u'$ & best condition point $y_\varphi$ & range of $u$\\
\hline
$y$ & $y^q(1-y)$ & $q/(q+1)$ & $[\delta,1]$\\
$v=\ln y$ & $y^{q-1}(1-y)$ & $(q-1)/q$ & $[\ln\delta,0]$\\
$z=y^{1-q}$ & $(1-q)(1-y)$ & $\delta$ (initial point) & $[1,\delta^{1-q}]$\\
$\ln\frac{y}{1-y}$ & $y^{q-1}$ & right end & about $[\ln\delta,1/\mu]$\\
\hline
\end{tabular}
\end{center}
For $z=y^{1-q}$ the initial condition is itself optimal. For the logit the optimal point is the right end, where $y=1$ to working precision, so that the logit cannot be recovered from $y$ there, and its range grows like $1/\mu$; we did not test it. The amplification factor of the IVP form is the sensitivity of Remark~\ref{rem:sens}: discretization and rounding errors near $x=-1$ become shifts of the front. Pinning does not remove this sensitivity from the problem; it moves it into the position of the interval ends relative to the front, $s_\pm=(\pm1-x_0)/\mu$, with $s_-=\Theta_q(\delta)$. Computed from the closed form, this is harmless: an error $e$ in $s_-$ changes the solution at $x=-1$ only by $f_q(\delta)\,e$, i.e.\ relatively by about $\mu e$. The pinned form also gives the initial value as an output, and $|y(-1)/\delta-1|$ is a useful a posteriori check.

The mechanism behind the amplification is visible in the discrete problem as well. We number the Chebyshev--Lobatto points from $\xi_0=1$ to $\xi_N=-1$, so that \eqref{eq:logvar} is collocated at $\xi_0,\dots,\xi_{N-1}$ and the last row holds the condition. Let $\phi_j=v'(x_j)$ be the nodal values of the derivative of the exact solution, i.e.\ of the tangent to the family of its translates, and let $e_N$ be the unit vector of the condition row.

\begin{lemma}[Discrete translation mode]\label{lem:mode}
Let $J$ be the Newton matrix of the IVP or the pinned form, evaluated at the nodal values of the exact solution $v$ of \eqref{eq:logvar}, and $D_x$ the differentiation matrix in $x$. Then $J\phi=c\,e_N+\rho$ with $\rho_j=\mu\bigl((D_x\phi)_j-v''(x_j)\bigr)$ for $j<N$, and
\begin{itemize}
\item for the IVP form, $c=v'(-1)$ and $\rho_N=0$;
\item for the pinned form, $c=v'(x^\ast)$ and $\rho_N=(I_Nv')(x^\ast)-v'(x^\ast)$, with $I_N$ the interpolant on the grid.
\end{itemize}
Consequently $J^{-1}e_N=(\phi-J^{-1}\rho)/c$.
\end{lemma}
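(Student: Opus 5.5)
The plan is a direct row-by-row computation of $J\phi$, using only that $v$ solves \eqref{eq:logvar} exactly. First I write out the Newton matrix. For $j<N$ the residual row is $\mu(D_xV)_j-F_q(V_j)$ with $F_q(v)=e^{(q-1)v}-e^{qv}$. Its Jacobian row is therefore $\mu(D_x)_{j\cdot}-F_q'(V_j)e_j^T$. The last row is the condition. For the IVP form it is $e_N^T$, since $\xi_N=-1$ and the condition reads $V_N=\ln\delta$. For the pinned form it is the barycentric row $\ell^T$, which satisfies $\ell^TW=(I_NW)(x^\ast)$ for every nodal vector $W$. Both forms have identical rows $j<N$, and they are evaluated at the exact nodal values $V_j=v(x_j)$.

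\emph{Rows $j<N$.} Differentiating \eqref{eq:logvar} in $x$ gives $\mu v''(x)=F_q'(v(x))v'(x)$. This identity is the discrete counterpart of the observation in the proof of Theorem~\ref{cor:propagator} that $u'$ solves the homogeneous linearized equation. Applying the row $j$ to $\phi$ gives
\[
(J\phi)_j=\mu(D_x\phi)_j-F_q'(v(x_j))\,v'(x_j)=\mu(D_x\phi)_j-\mu v''(x_j)=\rho_j .
\]
The only residual comes from the fact that $D_x$ differentiates the interpolant of $v'$ and not $v'$ itself. Here $D_x$ is the mapped differentiation matrix, so $(D_x\phi)_j$ is the $x$-derivative at $x_j$ of the polynomial in $\xi$ that interpolates $\phi$. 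No further property of it is needed.

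\emph{Last row.} For the IVP form, $(J\phi)_N=\phi_N=v'(-1)$. So $c=v'(-1)$ and $\rho_N=0$. For the pinned form, $(J\phi)_N=(I_Nv')(x^\ast)$. Writing this as $v'(x^\ast)+\bigl((I_Nv')(x^\ast)-v'(x^\ast)\bigr)$ gives $c=v'(x^\ast)$ and the stated $\rho_N$.

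\emph{Consequence.} From $J\phi=ce_N+\rho$, assuming $J$ is invertible (this is implicit in the statement) and $c\neq0$, I apply $J^{-1}$ and divide by $c$. The constant $c$ is nonzero because $v'=Q>0$ on the orbit. This gives $J^{-1}e_N=(\phi-J^{-1}\rho)/c$.

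There is no genuine obstacle here. The only point needing care is bookkeeping: the node ordering with $\xi_N=-1$, the chain-rule scaling in $D_x$ under the sinh map, and the fact that the pinned row is an interpolation functional rather than a unit vector. That last fact is exactly what produces the nonzero $\rho_N$. I would close with the interpretation. Since $\rho$ is an interpolation-error quantity and is small, $J^{-1}e_N\approx\phi/c$, and the amplification $\|\phi\|/|c|$ is the discrete analogue of $\max Q/Q(\text{condition point})$ from Proposition~\ref{prop:pin}. It is of order $1/\mu$ for $c=v'(-1)$ and $O(1)$ for $c=v'(x^\ast)=\max v'$.
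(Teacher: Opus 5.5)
Your computation is correct and is essentially the paper's proof: the identity $\mu v''=F_q'(v)\,v'$ disposes of the collocation rows $j<N$, and the condition row ($e_N^T$, or the barycentric row at $\xi^\ast$) gives $c$ and $\rho_N$. You also make explicit that $J$ must be invertible and that $c\neq0$; note that in the variable $x$ one has $v'=Q/\mu>0$, not $v'=Q$, which does not affect the argument. Only your closing heuristic overreaches, outside the statement itself: a small $\rho$ does not by itself give $J^{-1}e_N\approx\phi/c$, because $J^{-1}\rho$ is not controlled uniformly in $\mu$, and this is why the paper says the lemma yields no stability bound.
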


\begin{proof}
For $j<N$, $(J\phi)_j=\mu(D_x\phi)_j-F_q'(v(x_j))\,\phi_j$, and differentiating $\mu v'=F_q(v)$ gives $\mu v''=F_q'(v)\,v'$, so $(J\phi)_j=\rho_j$. The condition row is $e_N^T$ for the IVP form and the barycentric interpolation row at $\xi^\ast$ for the pinned form, which gives the last component.
\end{proof}

The column $J^{-1}e_N$ is the response of the discrete solution to a perturbation of the condition datum, and up to the term $J^{-1}\rho/c$, which involves the error of differentiating the interpolant of $v'$, it is the translation mode $\phi/c$. For the IVP form $c=(1-\delta)/(q-1)$ and $\|\phi\|_\infty=Q(y^\ast)/\mu$, so the maximum of this column is about $Q(y^\ast)/\delta^{q-1}$, the amplification factor of Proposition~\ref{prop:pin}. For the pinned form $c=\|\phi\|_\infty$, and the maximum is about $1$. Computed at $N=N_{\min}$ of Tables~\ref{tab:colloc} and~\ref{tab:q3colloc}, with $J$ evaluated at the converged pinned solution, the maximum for the IVP form agrees with $Q(y^\ast)/(\delta^{q-1}(1-\delta))$ to within $2\%$ as long as the $2$-norm condition number of $J$ is below about $3\cdot10^{15}$ ($q=2$, $\delta\ge10^{-8}$; $q=3$, $\delta\ge10^{-4}$); for smaller $\delta$ the IVP matrix is numerically singular. For the pinned form it lies between $0.994$ and $1.000$ for every $\delta$. The lemma does not give a stability bound, because $J^{-1}\rho$ is not controlled uniformly in $\mu$; it shows that the condition is best imposed where the unknown changes fastest, which is Theorem~\ref{cor:propagator} in discrete form.

\begin{remark}[Choice of the unknown]\label{rem:variable}
Theorem~\ref{cor:propagator} decides where the condition can be imposed. A second property of the unknown is its dynamic range, since a global differentiation matrix couples all nodes, so that rounding errors made in the largest values reach the transition zone. Table~\ref{tab:variables} compares, for $q=2$ and on the grids of Table~\ref{tab:colloc}, the unknowns $y$, $v=\ln y$ and $z=1/y$, each with the initial condition and with an interior condition.
\begin{itemize}
\item For $y$ itself the initial condition already loses accuracy for $\delta=10^{-2}$; from $\delta=10^{-4}$ on Newton's method does not converge or converges to a wrong discrete solution. The optimal interior condition, at $y=2/3$, works down to $\delta=10^{-8}$, but for $\delta\le10^{-10}$ Newton's method does not converge in $80$ iterations.
\item For $z=1/y$ the initial condition is optimal, and the interior condition at $y=1/2$ amplifies by at most $2$: Newton's method converges in two to five iterations in both forms and for every $\delta$. However, $z$ ranges from $1$ to $1/\delta$, and for $\delta\le10^{-4}$ the error grows like $u/\delta$, with $u$ the unit roundoff, up to $2\cdot10^{-4}$ for $\delta=10^{-12}$.
\item Only $v=\ln y$ combines a propagator bounded by one (with the pin) with a moderate range, $\ln\delta\le v\le0$. Its pinned form reaches the interpolation error for every $\delta$.
\end{itemize}
\end{remark}

\paragraph{Algorithm.} The pinned scheme for \eqref{eq:fq} consists of the following steps.
\begin{enumerate}
\item Compute $\mu$, $x_0=-\mu\Theta_q(\delta)-1$, the pin value $y^\ast$ (the maximiser of $|h|$) and its position $s_{\mathrm{pin}}=\Theta_q(y^\ast)$; for a general $f$ by quadrature (Section~\ref{sec:general}).
\item Take a nearest singularity $s_{\mathrm{sing}}$ in the upper half-plane (here $\pi i$; for a general $f$ from Remark~\ref{rem:residue}, quadrature or AAA) and set $\eps=\mu\Im s_{\mathrm{sing}}$, centre $x_0+\mu\Re s_{\mathrm{sing}}$.
\item Form the nodes $x_j=g(\xi_j)$ from $\xi_0=1$ to $\xi_N=-1$, compute $s_j$ directly from \eqref{eq:sinh}, and $D_x=\mathrm{diag}(1/g'(\xi_j))\,D$.
\item Residual: $R_j=\mu(D_xv)_j-F_q(v_j)$ for $j<N$, $R_N=\ell^Tv-\ln y^\ast$ with the barycentric row $\ell$ at $\xi^\ast=g^{-1}(x^\ast)$, $x^\ast=x_0+\mu s_{\mathrm{pin}}$. Jacobian: $J=\mu D_x-\mathrm{diag}(F_q'(v))$ with last row $\ell^T$.
\item Start from the generic profile, with the softplus function evaluated as $z$ for $z>30$ to avoid overflow.
\item Damped Newton: $\Delta=-J^{-1}R$. If $\max|\Delta|<10^{-12}$ and $\|R(v+\Delta)\|_\infty<10^{-10}$, accept $v+\Delta$ and stop. Otherwise take the first $\lambda\in\{1,\tfrac12,\dots,2^{-10}\}$ with $\|R(v+\lambda\Delta)\|_2<(1-10^{-4}\lambda)\|R(v)\|_2$, and $\lambda=2^{-10}$ if there is none. Declare failure after $80$ iterations.
\item Check the recovered initial value $|y(-1)/\delta-1|$, the final nodal residual and the off-grid defect (Section~\ref{sec:q2}).
\end{enumerate}

\section{Numerical results}\label{sec:num}

All computations were done with the scripts in the public repository listed in the Declarations; the AI assistant Claude (Anthropic) was used to help write and check this code and to verify the reported numbers against the saved outputs, and all results were checked by the author. The errors below are sampled maximum errors, evaluated on $4000$ Chebyshev points of the first kind in $\xi$, of the barycentric interpolant of the nodal values $y_j=e^{v_j}$, measured against the exact solution \eqref{eq:yx}. Exponentiating the interpolant of the $v_j$ instead gives errors $4$ to $20$ times smaller at $N=N_{\min}$ in Tables~\ref{tab:colloc} and~\ref{tab:q3colloc}, in line with the milder, logarithmic singularities of $\ln Y_q$ (Section~\ref{sec:approx}); we report the more conservative values. Throughout, $N$ is the polynomial degree, and the grid has $N+1$ points. We use the sixteen values $N=32,64,\dots,512$ and stop Newton's method as in step~6 of the algorithm; for the unknown $1/y$ in Table~\ref{tab:variables} both tolerances are relative to $\max|u|$. A run that stops in this way is called converged; a converged run with error above $10^{-2}$ is called a wrong discrete solution. Runs in which the residual stagnates at the level of rounding errors while the full correction $\Delta$ does not become small count as not converged. In all $645$ runs of Tables~\ref{tab:colloc}--\ref{tab:q3colloc} and~\ref{tab:arrhenius} and Figs.~\ref{fig:colloc}, \ref{fig:q3} and~\ref{fig:general} that are classified as converged, the final residual is at most $5.0\cdot10^{-12}$ (for the integral form of Section~\ref{sec:ablation} measured in the differentiation form). A pinned solve with $N=384$ ($q=2$, $\delta=10^{-10}$, five Newton steps with dense LU factorization) takes about $0.03$ seconds (Python 3.13, NumPy 2.5.3 with OpenBLAS 0.3.34 on two threads of an Intel Xeon processor at 2.1\,GHz, Linux).

\subsection{\texorpdfstring{The flame problem, $q=2$}{The flame problem, q=2}}\label{sec:q2}

Table~\ref{tab:colloc} and Fig.~\ref{fig:colloc} show the results for $\delta=\mu$ from $10^{-2}$ to $10^{-12}$.
\begin{itemize}
\item The pinned form converges in exactly five Newton iterations for every $\delta$ and every $N$. Its error is $0.85$--$1.01$ times the interpolation error of the exact solution on the same grid, down to machine precision.
\item The initial value is recovered as an output: $|y(-1)/\delta-1|\le6\cdot10^{-13}$ for $\delta\le10^{-4}$ and $N\ge N_{\min}$.
\item In the IVP form, Newton needs more iterations. It does not converge within $80$ iterations for $\delta\le10^{-8}$ and $N\ge192$, for $\delta=10^{-6}$ and $N\ge128$, and in eleven further cases for $\delta=10^{-5}$ and $10^{-4}$; it also converges to wrong discrete solutions, with errors near $0.1$ (e.g.\ $\delta=10^{-8}$, $N=128$ and $\delta=10^{-10}$, $N=160$). Where it converges to the right solution, its accuracy is limited to about $10u/\delta$, with $u$ the unit roundoff.
\item The $2$-norm condition numbers of the final Newton matrices at $N=256$--$512$ grow like $\delta^{-2}$ in the IVP form ($3.3\cdot10^{11}$--$7.0\cdot10^{11}$ at $\delta=10^{-6}$), up to $10^{16}$--$5\cdot10^{18}$ for $\delta\le10^{-10}$. In the pinned form they grow only like $\delta^{-1}$ ($2.0\cdot10^{6}$--$4.2\cdot10^{6}$ and $1.1\cdot10^{10}$--$2.4\cdot10^{10}$ at $\delta=10^{-6}$ and $10^{-10}$); this growth comes from the scaling of the matrix rows and does not affect the accuracy. After each row is scaled to unit $\infty$-norm, the condition numbers of the pinned form lie between $93$ and $499$ for every $\delta$ ($95$ and $369$ for $q=3$, $\delta=10^{-2},10^{-4},10^{-6}$), whereas those of the IVP form still grow, from about $10^4$ at $\delta=10^{-2}$ to $10^7$--$10^{12}$ for $\delta\le10^{-6}$.
\item The outcome of the IVP form depends on rounding errors. Repeating all $144$ runs (nine values of $\delta$ from $10^{-1}$ to $10^{-12}$, sixteen of $N$) with one instead of two BLAS threads, which only changes the order of the floating-point operations, changed the number of Newton iterations of the IVP form in $36$ runs, its error by more than $0.1\%$ in $85$, and the convergence or non-convergence of Newton's method in $8$. The errors of the pinned form changed by at most $5\cdot10^{-4}$ relatively, and its iteration counts not at all. The iteration counts of the IVP form in Tables~\ref{tab:colloc} and~\ref{tab:q3colloc} are therefore indicative only.
\item A check that needs no exact solution is the off-grid defect $\max|\mu p'-F_q(p)|$ of the interpolant $p$ of the computed $v$ on the $4000$ test points, with $F_q(v)=e^{(q-1)v}-e^{qv}$. For the pinned form at $N_{\min}$ it is at most $5\cdot10^{-11}$, for $q=2$ and $q=3$ (Tables~\ref{tab:colloc} and~\ref{tab:q3colloc}). For the IVP solutions with errors of $2\cdot10^{-3}$ or more (at $N_{\min}$ and $N=384$) it is $6.9\cdot10^{-10}$--$5.8\cdot10^{-4}$, so it exposes wrong discrete solutions. It does not expose phase errors: for $q=2$, $\delta=10^{-6}$ and $10^{-8}$ at $N_{\min}$, the IVP errors are $9.4\cdot10^{-9}$ and $2.4\cdot10^{-6}$, but the defect is the same as for the pinned form, because a translate of the solution satisfies the differential equation. Phase errors are detected by the recovered initial value of the pinned form.
\item Unmapped Chebyshev collocation, even in the pinned form, needs $768$ nodes for $10^{-10}$ at $\delta=10^{-2}$ (prediction $733$) and has error $1.4\cdot10^{-3}$ with $1536$ nodes at $\delta=10^{-3}$ (Fig.~\ref{fig:colloc}b).
\end{itemize}

\begin{table}[t]
\centering\small
\caption{$q=2$: sinh-mapped Chebyshev collocation for $\delta\,y'=y^2(1-y)$, $y(-1)=\delta$. $N_{\min}$: smallest $N$ for which the pinned solution has maximal error $\le10^{-10}$; error, Newton iterations and recovered initial value at $N_{\min}$; last two columns: IVP form at $N=384$ ($^\ast$: Newton did not converge in 80 iterations).}
\label{tab:colloc}
\begin{tabular}{ccccccccc}
\toprule
 & & & \multicolumn{4}{c}{pinned form} & \multicolumn{2}{c}{IVP form, $N=384$}\\
\cmidrule(lr){4-7}\cmidrule(lr){8-9}
$\delta$ & $L$ & $\frac{2L}{\pi}\ln10^{10}$ & $N_{\min}$ & error & its & $|y(-1)/\delta-1|$ & error & its\\
\midrule
$10^{-2}$ & 4.15 & 61 & 64 & $6.8\cdot10^{-11}$ & 5 & $5.5\cdot10^{-12}$ & $1.3\cdot10^{-13}$ & 6 \\
$10^{-4}$ & 8.76 & 128 & 160 & $4.7\cdot10^{-13}$ & 5 & $2.7\cdot10^{-15}$ & $7.5\cdot10^{-12}$ & 21 \\
$10^{-6}$ & 13.36 & 196 & 224 & $4.6\cdot10^{-12}$ & 5 & $7.0\cdot10^{-14}$ & $5.2\cdot10^{-9}{}^{\ast}$ & 80 \\
$10^{-8}$ & 17.97 & 263 & 288 & $1.4\cdot10^{-11}$ & 5 & $1.4\cdot10^{-13}$ & $1.9\cdot10^{-8}{}^{\ast}$ & 80 \\
$10^{-10}$ & 22.57 & 331 & 352 & $2.8\cdot10^{-11}$ & 5 & $8.2\cdot10^{-14}$ & $8.6\cdot10^{-2}{}^{\ast}$ & 80 \\
$10^{-12}$ & 27.18 & 398 & 416 & $4.4\cdot10^{-11}$ & 5 & $5.3\cdot10^{-13}$ & $9.8\cdot10^{-2}{}^{\ast}$ & 80 \\
 
\bottomrule
\end{tabular}
\end{table}

\begin{figure}[t]
\centering
\includegraphics[width=\textwidth]{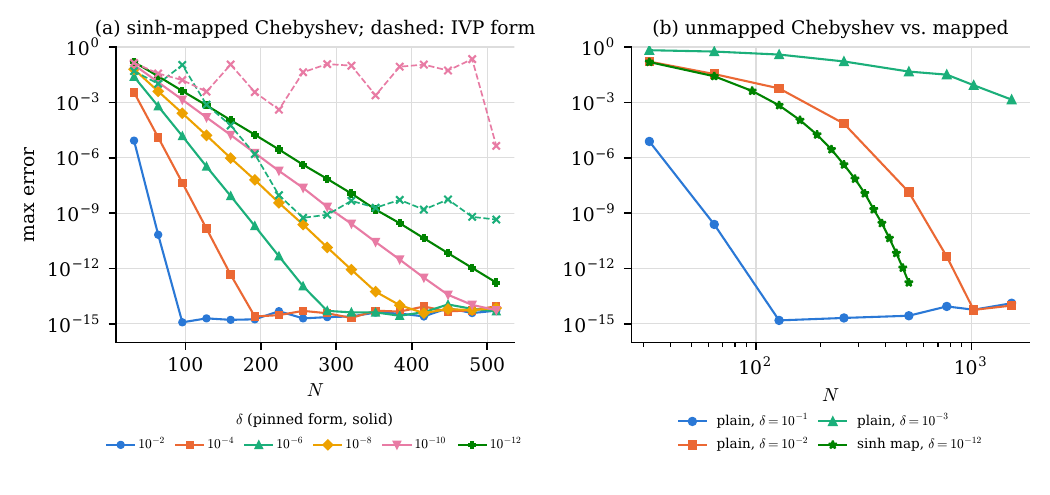}
\caption{$q=2$. (a) Maximal error of the sinh-mapped collocation versus $N$ (solid: pinned form; dashed: IVP form for $\delta=10^{-6},10^{-10}$). (b) Unmapped Chebyshev collocation (pinned) for $\delta=10^{-1},10^{-2},10^{-3}$ compared with the mapped method at $\delta=10^{-12}$.}
\label{fig:colloc}
\end{figure}

\begin{table}[t]
\centering\footnotesize
\setlength{\tabcolsep}{3pt}
\caption{$q=2$: maximal error of the sinh-mapped collocation solution for three choices of the unknown, with the initial condition (IVP) and with an interior condition (pin at $y=2/3$ for $y$, at $y=1/2$ for $\ln y$ and $1/y$), at $N=N_{\min}$ of Table~\ref{tab:colloc}. Third column: interpolation error of the exact solution on the same grid; $^\ast$: Newton's method did not converge in $80$ iterations.}
\label{tab:variables}
\begin{tabular}{ccccccccc}
\toprule
 & & & \multicolumn{2}{c}{$y$} & \multicolumn{2}{c}{$v=\ln y$} & \multicolumn{2}{c}{$z=1/y$}\\
\cmidrule(lr){4-5}\cmidrule(lr){6-7}\cmidrule(lr){8-9}
$\delta$ & $N$ & interpolation & IVP & pinned & IVP & pinned & IVP & pinned\\
\midrule
$10^{-2}$ & 64 & $7.1\cdot10^{-11}$ & $9.6\cdot10^{-8}$ & $7.3\cdot10^{-11}$ & $1.5\cdot10^{-10}$ & $6.8\cdot10^{-11}$ & $7.1\cdot10^{-11}$ & $7.1\cdot10^{-11}$ \\
$10^{-4}$ & 160 & $4.9\cdot10^{-13}$ & $4.1\cdot10^{-6}{}^{\ast}$ & $3.0\cdot10^{-13}$ & $1.9\cdot10^{-12}$ & $4.7\cdot10^{-13}$ & $2.8\cdot10^{-12}$ & $6.6\cdot10^{-13}$ \\
$10^{-6}$ & 224 & $4.7\cdot10^{-12}$ & $9.3\cdot10^{-2}{}^{\ast}$ & $2.9\cdot10^{-12}$ & $9.4\cdot10^{-9}{}^{\ast}$ & $4.6\cdot10^{-12}$ & $2.6\cdot10^{-10}$ & $5.1\cdot10^{-11}$ \\
$10^{-8}$ & 288 & $1.4\cdot10^{-11}$ & $1.0\cdot10^{-1}$ & $9.4\cdot10^{-12}$ & $2.4\cdot10^{-6}{}^{\ast}$ & $1.4\cdot10^{-11}$ & $1.2\cdot10^{-8}$ & $3.0\cdot10^{-9}$ \\
$10^{-10}$ & 352 & $2.8\cdot10^{-11}$ & $3.9\cdot10^{-2}{}^{\ast}$ & $1.6\cdot10^{-6}{}^{\ast}$ & $2.4\cdot10^{-3}{}^{\ast}$ & $2.8\cdot10^{-11}$ & $8.4\cdot10^{-7}$ & $1.7\cdot10^{-7}$ \\
$10^{-12}$ & 416 & $4.4\cdot10^{-11}$ & $4.0\cdot10^{-2}{}^{\ast}$ & $5.5\cdot10^{-7}{}^{\ast}$ & $1.7\cdot10^{-1}{}^{\ast}$ & $4.4\cdot10^{-11}$ & $2.1\cdot10^{-4}$ & $1.7\cdot10^{-5}$ \\
 
\bottomrule
\end{tabular}
\end{table}

\subsection{Ablation, integral reformulation and robustness}\label{sec:ablation}

Table~\ref{tab:ablation} separates the three ingredients for $q=2$ at $N=512$: the map, the condition, and the formulation of the collocation equations. Besides the differentiation form used so far we use an integral (Birkhoff) form in the spirit of \cite{WangSamsonZhao2014,Du2016}: the unknowns are $\dd v/\dd\xi$ at $\xi_0,\dots,\xi_{N-1}$ and $v(-1)$, and $v=v(-1)+B\,\dd v/\dd\xi$ with the pseudospectral integration matrix $B$ of the Lagrange basis on $\xi_0,\dots,\xi_{N-1}$. Both forms define the same discrete solution.
\begin{itemize}
\item Without the map, $N=512$ resolves the solution only for $\delta=10^{-2}$; for $\delta\le10^{-4}$ both conditions fail. The integral form gives the same picture.
\item With the map, only the pinned form reaches the level of rounding errors ($4.4\cdot10^{-15}$--$1.7\cdot10^{-13}$) for every $\delta$, in five Newton iterations, in both formulations.
\item In the integral form the IVP form is up to $4.9$ times more accurate than in the differentiation form, and it converges for $\delta=10^{-6}$. Its errors are still $2.8\cdot10^{3}$ and $2.2\cdot10^{5}$ times the interpolation error for $\delta=10^{-4}$ and $10^{-6}$, and for $\delta\le10^{-8}$ Newton's method does not converge. The amplification of Remark~\ref{rem:sens} is not removed by the reformulation.
\item The integral form is not better conditioned here. After row scaling, the $2$-norm condition numbers of the Newton matrices at the final iterate are $1.5\cdot10^{4}$--$2.2\cdot10^{12}$ for the IVP form and $1.5\cdot10^{4}$--$5.9\cdot10^{6}$ for the pinned form, against $1.7\cdot10^{4}$--$3.8\cdot10^{11}$ and $2.2\cdot10^{2}$--$5.0\cdot10^{2}$ in the differentiation form.
\end{itemize}
Robustness was tested at $N_{\min}$ of Table~\ref{tab:colloc} for $\delta=10^{-6}$ and $10^{-10}$. With the pin at $y_p\in\{0.05,0.1,0.25,0.5,0.75,0.9,0.95\}$ instead of $y^\ast=1/2$, for which Theorem~\ref{cor:propagator}(b) allows an amplification up to $5.3$, the error is $0.94$--$1.17$ times the interpolation error and Newton's method takes five iterations in every case. With the centre of the map shifted by $\pm2\pi\mu$ or $\pm5\pi\mu$, or its width multiplied by $\tfrac14$, $\tfrac12$, $2$ or $4$, Newton's method again takes five iterations and the error stays within $0.57$--$1.23$ times the interpolation error on the perturbed grid. That interpolation error, however, grows to $2\cdot10^{-4}$--$4\cdot10^{-4}$ for a shift of $\pm2\pi\mu$, to $1\cdot10^{-2}$--$2\cdot10^{-2}$ for $\pm5\pi\mu$, and to $2\cdot10^{-5}$--$6\cdot10^{-5}$ and $3\cdot10^{-3}$--$5\cdot10^{-3}$ for the widths $2\pi\mu$ and $4\pi\mu$; the narrower widths cost nothing at these $N$. The pin and the Newton iteration are thus robust, and the accuracy is set by the map, whose centre must be known to within a fraction of a layer width.

\begin{table}[t]
\centering\scriptsize
\setlength{\tabcolsep}{2.5pt}
\caption{Ablation for $q=2$ at $N=512$: maximal error and number of Newton iterations (in parentheses) without and with the sinh map, with the initial condition (IVP) and with the pin, for the differentiation and the integral (Birkhoff) formulation; second column: interpolation error of the exact solution on the sinh-mapped grid ($^\ast$: no convergence in $80$ iterations; diverged: overflow during Newton's method).}
\label{tab:ablation}
\begin{tabular}{cccccccc}
\toprule
 & & \multicolumn{2}{c}{no map, differentiation} & \multicolumn{2}{c}{sinh map, differentiation} & \multicolumn{2}{c}{sinh map, integral}\\
\cmidrule(lr){3-4}\cmidrule(lr){5-6}\cmidrule(lr){7-8}
$\delta$ & interp.\ error & IVP & pin & IVP & pin & IVP & pin\\
\midrule
$10^{-2}$ & $2.0\cdot10^{-15}$ & $2.1\cdot10^{-8}$ (6) & $1.4\cdot10^{-8}$ (5) & $8.3\cdot10^{-14}$ (6) & $5.2\cdot10^{-15}$ (5) & $2.1\cdot10^{-14}$ (6) & $4.4\cdot10^{-15}$ (5) \\
$10^{-4}$ & $1.8\cdot10^{-15}$ & $7.6\cdot10^{-1}$ (7) & $6.1\cdot10^{-1}$ (8) & $2.4\cdot10^{-11}$ (39) & $9.1\cdot10^{-15}$ (5) & $4.9\cdot10^{-12}$ (6) & $4.8\cdot10^{-15}$ (5) \\
$10^{-6}$ & $1.9\cdot10^{-15}$ & $9.5\cdot10^{-1}{}^{\ast}$ (80) & $1.1\cdot10^{0}{}^{\ast}$ (80) & $4.5\cdot10^{-10}{}^{\ast}$ (80) & $4.9\cdot10^{-15}$ (5) & $4.1\cdot10^{-10}$ (26) & $6.3\cdot10^{-15}$ (5) \\
$10^{-8}$ & $2.2\cdot10^{-15}$ & $9.5\cdot10^{-1}{}^{\ast}$ (80) & diverged & $5.7\cdot10^{-8}{}^{\ast}$ (80) & $7.3\cdot10^{-15}$ (5) & $4.4\cdot10^{-8}{}^{\ast}$ (80) & $1.1\cdot10^{-14}$ (5) \\
$10^{-10}$ & $1.7\cdot10^{-15}$ & $9.5\cdot10^{-1}{}^{\ast}$ (80) & diverged & $4.4\cdot10^{-6}{}^{\ast}$ (80) & $5.6\cdot10^{-15}$ (5) & $1.6\cdot10^{-6}{}^{\ast}$ (80) & $1.3\cdot10^{-14}$ (5) \\
$10^{-12}$ & $1.8\cdot10^{-13}$ & $9.5\cdot10^{-1}{}^{\ast}$ (80) & diverged & $1.2\cdot10^{-1}{}^{\ast}$ (80) & $1.7\cdot10^{-13}$ (5) & $1.2\cdot10^{-1}{}^{\ast}$ (80) & $1.7\cdot10^{-13}$ (5) \\
 
\bottomrule
\end{tabular}
\end{table}

\subsection{\texorpdfstring{A stronger degeneracy, $q=3$}{A stronger degeneracy, q=3}}

For $q=3$ the slow phase lasts about $1/(2\delta^2)$ time units and $\mu=2\delta^2$; we take $\delta$ from $10^{-1}$ to $10^{-6}$, i.e.\ $\mu$ down to $2\cdot10^{-12}$, and pin at $y^\ast=2/3$ (Table~\ref{tab:q3colloc}, Fig.~\ref{fig:q3}).
\begin{itemize}
\item For $\delta\le10^{-2}$, the number of nodes needed for $10^{-10}$ agrees with the prediction $\frac{2L}{\pi}\ln10^{10}$ to within the resolution of the $N$-grid.
\item Newton's method converges in five iterations in every case (six for $N=32$ when $\delta\le10^{-5}$). At $N_{\min}$ the initial value is recovered with a relative error of at most $4.6\cdot10^{-12}$.
\item The IVP form fails from $\delta=10^{-4}$ on. At $N=384$ it either does not converge or converges to wrong discrete solutions with errors between $0.2$ and $0.4$. Already for $\delta=10^{-3}$ it converges to wrong solutions (errors $0.22$ and $0.39$ for $N=128$ and $192$) or does not converge ($N=160$, $224$--$352$ and $416$--$512$). Its Newton matrices have condition numbers up to $3.5\cdot10^{18}$. The condition numbers of the pinned form grow like $\mu^{-1}$, from $10^{2}$--$2.5\cdot10^{4}$ at $\delta=10^{-1}$ to $2\cdot10^{11}$--$10^{12}$ at $\delta=10^{-6}$.
\end{itemize}
Fig.~\ref{fig:grid}(b) collects the node counts of both families as functions of the layer width. The counts grow by $32$ nodes per decade of $\mu$ for both $q=2$ and $q=3$ (by $48$ between $\mu=10^{-2}$ and $10^{-4}$ for $q=2$), close to the slope $\frac2\pi\ln10\cdot\ln10^{10}\approx33.8$ predicted by \eqref{eq:rate}. For $q=2$ the constant $C$ in \eqref{eq:rate} is slightly larger, which shifts the counts by about one step of the $N$-grid.

\begin{table}[t]
\centering\small
\caption{$q=3$: sinh-mapped Chebyshev collocation for $\mu y'=y^3(1-y)$, $y(-1)=\delta$, $\mu=2\delta^2$; columns as in Table~\ref{tab:colloc}, pin $y(x^\ast)=2/3$.}
\label{tab:q3colloc}
\begin{tabular}{ccccccccc}
\toprule
 & & & \multicolumn{4}{c}{pinned form} & \multicolumn{2}{c}{IVP form, $N=384$}\\
\cmidrule(lr){4-7}\cmidrule(lr){8-9}
$\delta$ & $L$ & $\frac{2L}{\pi}\ln10^{10}$ & $N_{\min}$ & error & its & $|y(-1)/\delta-1|$ & error & its\\
\midrule
$10^{-1}$ & 3.43 & 50 & 64 & $1.7\cdot10^{-13}$ & 5 & $3.3\cdot10^{-14}$ & $2.0\cdot10^{-14}$ & 7 \\
$10^{-2}$ & 8.07 & 118 & 128 & $6.1\cdot10^{-12}$ & 5 & $3.7\cdot10^{-13}$ & $1.3\cdot10^{-12}$ & 52 \\
$10^{-3}$ & 12.67 & 186 & 192 & $1.6\cdot10^{-11}$ & 5 & $1.1\cdot10^{-12}$ & $4.6\cdot10^{-10}$ & 73 \\
$10^{-4}$ & 17.28 & 253 & 256 & $2.7\cdot10^{-11}$ & 5 & $2.7\cdot10^{-12}$ & $3.7\cdot10^{-1}{}^{\ast}$ & 80 \\
$10^{-5}$ & 21.88 & 321 & 320 & $3.7\cdot10^{-11}$ & 5 & $3.9\cdot10^{-12}$ & $3.0\cdot10^{-1}{}^{\ast}$ & 80 \\
$10^{-6}$ & 26.49 & 388 & 384 & $4.5\cdot10^{-11}$ & 5 & $4.6\cdot10^{-12}$ & $2.3\cdot10^{-1}$ & 77 \\
 
\bottomrule
\end{tabular}
\end{table}

\begin{figure}[t]
\centering
\includegraphics[width=0.62\textwidth]{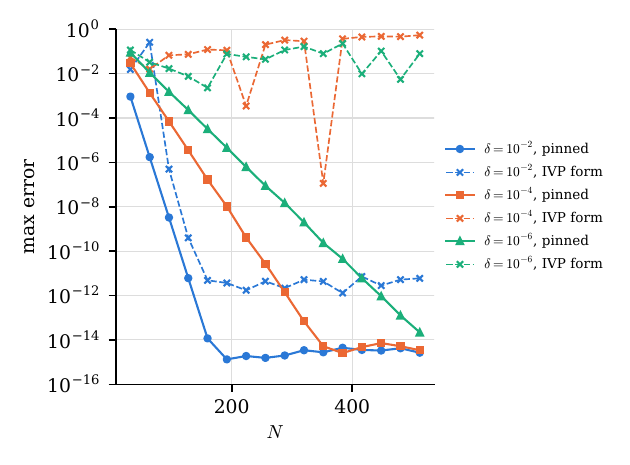}
\caption{$q=3$: maximal error versus $N$ of the pinned form (solid) and of the IVP form (dashed).}
\label{fig:q3}
\end{figure}

\subsection{Comparison with Chebfun}\label{sec:chebfun}

Chebfun \cite{ChebfunGuide} offers three ways to solve the flame problem $y'=y^2-y^3$, $y(0)=\delta$, on $[0,2/\delta]$ without any rescaling: its default IVP solver, which integrates with \texttt{ode113} and converts the result to a chebfun; global spectral collocation with the initial condition (\texttt{solvebvp}, the default before version~5.1); and the same collocation with the initial condition replaced by the interior condition $y(t^\ast)=2/3$, $t^\ast=T_\delta(2/3)$, imposed as a functional boundary condition. The value $2/3$ is the optimal pin for the variable $y$ (Section~\ref{sec:pin}); there is no coordinate map. We ran all three with Chebfun version 5.7.0 \cite{ChebfunGuide} and its default settings in MATLAB R2023b (Table~\ref{tab:chebfun}).
\begin{itemize}
\item Collocation with the initial condition converges only for $\delta=10^{-1}$. For $\delta\le10^{-2}$ Newton's method fails after $24$--$26$ iterations.
\item Collocation with the interior condition converges for $\delta=10^{-2}$ as well, in $13$ iterations with $879$ Chebyshev coefficients and error $9\cdot10^{-12}$, where the initial condition fails. This confirms the effect of the pin independently of our map and implementation. For $\delta\le10^{-3}$ it fails in the first iterations: without a map the solution needs about $23/(\pi\delta)\approx7300$ points for $\delta=10^{-3}$ (Section~\ref{sec:approx}), and Chebfun's default initial guess does not resemble the front.
\item The default \texttt{ode113} solver succeeds, but its error grows from $1.4\cdot10^{-10}$ for $\delta=10^{-1}$ to $3.3\cdot10^{-4}$ for $\delta=10^{-4}$, as the position of the front becomes more sensitive to the errors made in the slow phase (Remark~\ref{rem:sens}). For $\delta\le10^{-5}$ the computed solution cannot be represented with $65537$ points and the error is about $0.9$.
\item The method of this paper needs $33$--$225$ points and five Newton iterations for an error below $10^{-10}$ at every $\delta$.
\end{itemize}

\begin{table}[t]
\centering\small
\setlength{\tabcolsep}{3.5pt}
\caption{Chebfun 5.7.0 with default settings on $y'=y^2-y^3$, $y(0)=\delta$, $t\in[0,2/\delta]$. Length: number of Chebyshev coefficients (at termination for failed runs); its: Newton iterations; failed: Newton's method did not converge; not resolved: the \texttt{ode113} solution could not be represented with $65537$ points. Last three columns: the method of this paper (sinh map, pin), smallest $N$ for error $\le10^{-10}$, number of points $N+1$.}
\label{tab:chebfun}
\begin{tabular}{cccccccccccc}
\toprule
 & \multicolumn{2}{c}{\texttt{ode113}} & \multicolumn{3}{c}{collocation, $y(0)=\delta$} & \multicolumn{3}{c}{collocation, $y(t^\ast)=2/3$} & \multicolumn{3}{c}{this paper}\\
\cmidrule(lr){2-3}\cmidrule(lr){4-6}\cmidrule(lr){7-9}\cmidrule(lr){10-12}
$\delta$ & length & error & length & its & error & length & its & error & $N+1$ & its & error\\
\midrule
$10^{-1}$ & 82 & $1.4\cdot10^{-10}$ & 99 & 8 & $3.5\cdot10^{-13}$ & 94 & 7 & $5.1\cdot10^{-13}$ & 33 & 5 & $1.4\cdot10^{-11}$ \\
$10^{-2}$ & 721 & $7.3\cdot10^{-8}$ & 1377 & 26 & failed & 879 & 13 & $9.0\cdot10^{-12}$ & 65 & 5 & $6.8\cdot10^{-11}$ \\
$10^{-3}$ & 6564 & $4.2\cdot10^{-6}$ & 3170 & 26 & failed & -- & 1 & failed & 129 & 5 & $4.9\cdot10^{-14}$ \\
$10^{-4}$ & 60313 & $3.3\cdot10^{-4}$ & 2667 & 24 & failed & -- & 1 & failed & 161 & 5 & $4.7\cdot10^{-13}$ \\
$10^{-5}$ & -- & not resolved & 2659 & 26 & failed & -- & 2 & failed & 193 & 5 & $1.9\cdot10^{-12}$ \\
$10^{-6}$ & -- & not resolved & 2921 & 26 & failed & -- & 5 & failed & 225 & 5 & $4.6\cdot10^{-12}$ \\
 
\bottomrule
\end{tabular}
\end{table}

\subsection{Extended precision}\label{sec:hp}

In double precision the errors of Tables~\ref{tab:colloc} and~\ref{tab:q3colloc} stop at the level of rounding errors, which hides both the asymptotic convergence rate and the cause of the failure of the IVP form. We therefore repeated the computations in $256$-bit arithmetic (about $77$ digits), with the transcendental functions from mpmath \cite{mpmath} and the linear systems solved by LU factorization in FLINT/Arb \cite{Johansson2017}, for $q=2$, $\delta\in\{10^{-2},10^{-4},10^{-6}\}$ and $q=3$, $\delta\in\{10^{-1},10^{-2},10^{-3}\}$, $N=64,128,\dots,512$, with the same initial guess and Newton's method stopped as in step~6 of the algorithm with the tolerances $2^{-236}$ and $2^{-226}$ (Fig.~\ref{fig:hp}). Errors are measured on $600$ Chebyshev points of the first kind against the exact solution computed to working precision.
\begin{itemize}
\item The pinned form converges in $7$ ($q=2$) or $8$ ($q=3$) Newton iterations in every case. Its error is between $0.91$ and $1.00$ times the interpolation error of the exact solution on the same grid, down to the working precision $\approx10^{-76}$. The $256$-bit reference values agree with values recomputed at $512$ bits to $1\cdot10^{-76}$, so errors near the working precision are at the level of the reference.
\item The convergence rate is the predicted one. A least-squares fit of $\ln(\text{error})$ against $N$ (errors above $10^{-70}$) gives slopes between $0.999$ and $1.011$ times the exact exponent $\ln\rho_\ast$ of Section~\ref{sec:approx} for all six cases, and between $0.976$ and $1.008$ times its approximation $\pi/(2L)$. For $q=3$, where the identification of $\pm\pi i$ as the nearest singularities rests on Proposition~\ref{prop:sing}, this is an independent confirmation.
\item The failure of the IVP form is not a rounding effect. For $q=2$ it converges in all cases but one ($\delta=10^{-6}$, $N=128$), but its errors exceed the interpolation error by factors up to $190$ for $\delta=10^{-4}$ and up to $7.8\cdot10^{3}$ for $\delta=10^{-6}$: the discretization error near $x=-1$ is amplified as described by Proposition~\ref{prop:pin}. For $q=3$ and $\delta\le10^{-2}$, Newton's method from the generic initial guess either does not converge in $40$ iterations or converges to wrong discrete solutions (errors $0.2$--$0.4$ for $\delta=10^{-3}$, $N=128,192$), although the pinned form converges in eight iterations from the same guess. Started from the converged pinned solution, the IVP form does converge for $\delta=10^{-2}$, but with errors $240$ and $63$ times the interpolation error at $N=64$ and $128$.
\end{itemize}

\begin{figure}[t]
\centering
\includegraphics[width=\textwidth]{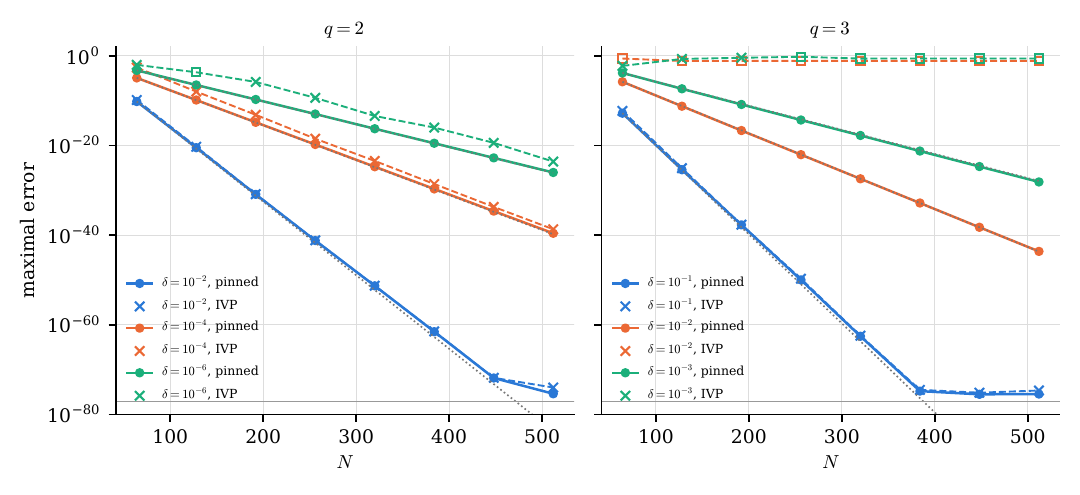}
\caption{Maximal error versus $N$ in $256$-bit arithmetic. Solid: pinned form; crosses and dashed: IVP form (open squares: Newton did not converge in $40$ iterations); dotted: the rate $e^{-\pi N/(2L)}$ of \eqref{eq:rate}; horizontal line: $2^{-256}$. Errors below about $10^{-75}$ are at the accuracy of the reference solution.}
\label{fig:hp}
\end{figure}

\subsection{An Arrhenius reaction rate}\label{sec:general}

The method uses three quantities: the time function, which gives the scale $\mu$ and the position of the condition; the nearest complex singularity of its inverse, which gives the centre and width of the sinh map; and the maximiser of $|h|$ in Theorem~\ref{cor:propagator}, which gives the condition point. None of them requires a closed form. We demonstrate this on the Arrhenius reaction rate of combustion theory in the Zeldovich--Frank-Kamenetskii form \cite{BuckmasterLudford1982,Zeldovich1985},
\[
y'=f(y):=(1-y)\exp\Bigl(-\frac{\beta(1-y)}{1-\alpha(1-y)}\Bigr),\qquad y(0)=0,
\]
which describes the adiabatic thermal explosion of a homogeneous mixture with reactant consumption. Here $y$ is the reduced temperature, $\beta$ the Zeldovich number and $\alpha$ the heat release parameter, and the rate constant is absorbed into the time unit. We take $\alpha=0.8$ and $\beta=2,\dots,6$. Three features distinguish this problem from \eqref{eq:fq}.
\begin{itemize}
\item The initial state is not an equilibrium: $f(0)=e^{-\beta/(1-\alpha)}>0$. The long induction phase is caused by this small rate and not by a degenerate equilibrium, and the initial condition $y(0)=0$ is the natural one, with no artificial $\delta$. The real orbit leaves $y_e=1-1/\alpha<0$, where $f$ vanishes to infinite order from the right, at $t=-\infty$ and tends to $y=1$.
\item For $\alpha>0$ the time function has no closed form. For $\alpha=0$, the Frank-Kamenetskii approximation, $\Theta(y)=\mathrm{Ei}(\beta(1-y_f))-\mathrm{Ei}(\beta(1-y))$ with the exponential integral $\mathrm{Ei}$, which we use only as a check.
\item The singularity does not come from $y=\infty$. Since $1/f(z)=-e^{-\beta/\alpha}/z+O(z^{-2})$ as $|z|\to\infty$, the time function grows logarithmically there, and the hypothesis $1/f=o(z^{-1})$ of Remark~\ref{rem:residue} fails. Instead, $f$ has an essential singularity at $y_e$, and $1/f(z)$ behaves like $\exp\bigl(\beta/(\alpha^2(z-y_e))\bigr)$ near $y_e$, so it vanishes to infinite order as $z\to y_e$ from the left. The candidate singularity of $Y=\Theta^{-1}$ is the corresponding asymptotic value,
\[
s_\ast=\int_{y_f}^{y_e}\frac{\dd z}{f(z)},
\]
taken along a path through the upper half-plane that reaches $y_e$ from the left. This path and its mirror image differ by a loop around $y_e$, so $\Im s_\ast=\pi\operatorname{Res}_{z=y_e}(1/f)$. The residues of $1/f$ at $y=1$, at $y_e$ and at infinity add up to zero, and the first and the last are $-1$ and $e^{-\beta/\alpha}$. Hence
\[
\Im s_\ast=\pi\bigl(1-e^{-\beta/\alpha}\bigr),
\]
slightly less than the value $\pi/|f'(1)|=\pi$ of Remark~\ref{rem:residue}. Unlike for $f_q$, we have no proof that no other singular value lies closer to the real axis; the AAA test below and the observed convergence rates support it.
\end{itemize}
We compute everything numerically, as one would for a general $f$.
\begin{itemize}
\item The time function, normalised by $\Theta(y_f)=0$, is written as $-\ln\frac{1-y}{1-y_f}+\int_{y_f}^y\bigl(\frac1{f(z)}-\frac1{1-z}\bigr)\dd z$, whose integrand is smooth on $[0,1]$, and the integral is evaluated by composite Gauss--Legendre quadrature ($4000$ panels of $14$ points). The solution $Y$ is obtained from it by Newton's method; this is also the reference solution. At ten points spread over the interval it agrees with $40$-digit quadrature to $2\cdot10^{-16}$.
\item The singularity is computed by quadrature in $30$-digit arithmetic \cite{mpmath} along the polygon $y_f\to y_f+40i\to y_e-1+40i\to y_e-1\to y_e$. Its imaginary part agrees with $\pi(1-e^{-\beta/\alpha})$ to $4\cdot10^{-16}$ (Table~\ref{tab:arrhenius}). For $\alpha=0$ the same quadrature, along a path to $+\infty$, reproduces $\mathrm{Ei}(\beta(1-y_f))+\pi i$ to all digits.
\item Since $y(0)=0$, the unknown is $y$ itself, and by Theorem~\ref{cor:propagator} the condition is imposed at the maximiser $y_f$ of $f$. With $T=\Theta(y_f)-\Theta(0)$, $\mu=1/T$ and $x=\mu t-1$, the condition point is $x^\ast=0$. The IVP form amplifies errors by $f(y_f)/f(0)$, which ranges from $3.0\cdot10^{3}$ for $\beta=2$ to $5.8\cdot10^{11}$ for $\beta=6$.
\end{itemize}
The sinh map is centred at $\mu\Re s_\ast$ with $\eps=\mu\Im s_\ast$, and Newton's method starts from the generic profile of Section~\ref{sec:pin}, which equals $y_f$ at the pin. The results are shown in Table~\ref{tab:arrhenius} and Fig.~\ref{fig:general}.
\begin{itemize}
\item For $N\ge64$ the pinned form converges in all cases, in $8$, $12$, $16$, $20$--$30$ and $24$--$28$ Newton iterations for $\beta=2,\dots,6$; for $N=32$ and $\beta=5,6$, where the interpolation error is about $3\cdot10^{-2}$, it does not converge. The iteration counts are larger than in Section~\ref{sec:q2} and grow with $\beta$, since the generic initial guess was built for \eqref{eq:fq}.
\item The error is $0.57$--$3.1$ times the interpolation error of the reference solution on the same grid, and it reaches $1.6\cdot10^{-15}$--$3.4\cdot10^{-15}$ for every $\beta$. The observed convergence rates are $0.96$--$1.05$ times $\ln\rho_\ast$. The number of nodes for accuracy $10^{-10}$ grows by $64$ per unit of $\beta$, i.e.\ by about $32$ per decade of $\mu$, as for \eqref{eq:fq} (Fig.~\ref{fig:grid}(b)).
\item The IVP form has errors between $0.09$ and $4.9$ at every $N$ for $\beta\ge3$. For $\beta=2$, where $\mu=1.8\cdot10^{-3}$, it fails for $N\le128$ and reaches $2\cdot10^{-14}$--$2\cdot10^{-12}$ for $N\ge160$.
\end{itemize}
AAA approximation of the computed solution for $\beta=6$ locates the singularity as accurately as for \eqref{eq:fq} (Table~\ref{tab:aaa}).

\begin{table}[t]
\centering\footnotesize
\setlength{\tabcolsep}{3.5pt}
\caption{Arrhenius rate, $\alpha=0.8$: rate $f(0)$ at the initial state, layer width $\mu$, condition point $y_f$, singularity $s_\ast$ (in the variable $s=x/\mu$, with $s=0$ at the condition point) and map parameter $L$, all computed numerically; smallest $N\in\{32,64,\dots,512\}$ for which the pinned form has maximal error $\le10^{-10}$, with its error and number of Newton iterations; error of the IVP form at the same $N$ ($^\ast$: Newton's method did not converge in $80$ iterations).}
\label{tab:arrhenius}
\begin{tabular}{cccccccccc}
\toprule
$\beta$ & $f(0)$ & $\mu$ & $y_f$ & $s_\ast$ & $L$ & $N_{\min}$ & pinned & its & IVP\\
\midrule
2 & $4.5\cdot10^{-5}$ & $1.8\cdot10^{-3}$ & 0.7070 & $0.2759+2.8837i$ & 6.0 & 96 & $9.9\cdot10^{-12}$ & 8 & $6.8\cdot10^{-1}$ \\
3 & $3.1\cdot10^{-7}$ & $2.0\cdot10^{-5}$ & 0.7756 & $0.7132+3.0677i$ & 10.4 & 160 & $4.5\cdot10^{-11}$ & 12 & $5.2\cdot10^{-1}$ \\
4 & $2.1\cdot10^{-9}$ & $1.9\cdot10^{-7}$ & 0.8176 & $0.9799+3.1204i$ & 15.0 & 224 & $4.5\cdot10^{-11}$ & 16 & $4.5\cdot10^{-1}$ \\
5 & $1.4\cdot10^{-11}$ & $1.6\cdot10^{-9}$ & 0.8462 & $1.1495+3.1355i$ & 19.8 & 288 & $3.5\cdot10^{-11}$ & 20 & $5.0\cdot10^{-1}$ \\
6 & $9.4\cdot10^{-14}$ & $1.3\cdot10^{-11}$ & 0.8669 & $1.2647+3.1399i$ & 24.6 & 352 & $2.3\cdot10^{-11}$ & 24 & $4.7\cdot10^{-1}$ \\
 
\bottomrule
\end{tabular}
\end{table}

\begin{figure}[t]
\centering
\includegraphics[width=\textwidth]{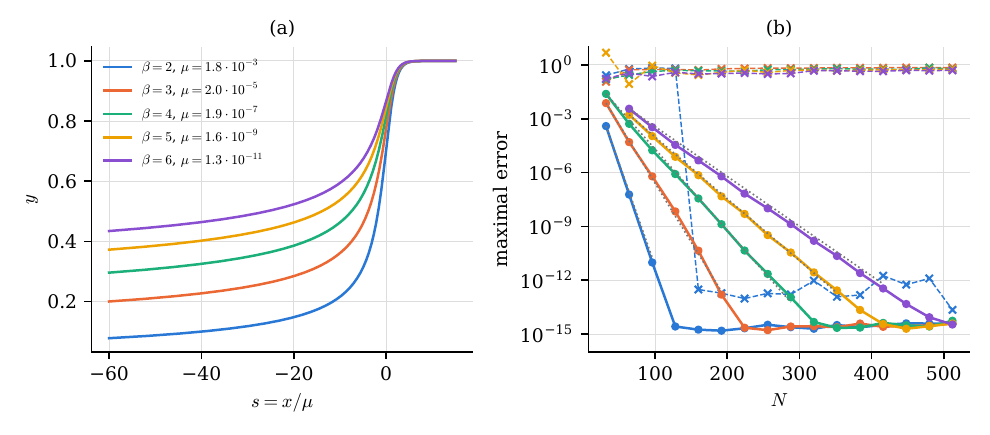}
\caption{Arrhenius rate, $\alpha=0.8$, $\beta=2,\dots,6$, with the time function, the singularity and the condition point computed numerically. (a) Solution in the variable $s=x/\mu$ ($s=0$ at the condition point). (b) Maximal error versus $N$ of the pinned form (solid) and of the IVP form (dashed, crosses); dotted: the rate $e^{-\pi N/(2L)}$.}
\label{fig:general}
\end{figure}

\subsection{Locating the singularities from the computed solution}\label{sec:aaa}

The map parameters above were taken from Proposition~\ref{prop:sing} and Remark~\ref{rem:residue}. The singularities can also be located a posteriori from a computed solution, by rational approximation with the AAA algorithm \cite{NakatsukasaSeteTrefethen2018}, a standard tool for numerical analytic continuation \cite{Trefethen2023}; see \cite{Gheorghiu2026} for an application to Laguerre collocation solutions of a boundary value problem on the half-line. We evaluate the pinned collocation solution at $3000$ equispaced points of $s\in[-30,30]$ ($x\in[-1,1]$ for the unmapped grid) and compute its AAA approximant with relative tolerance $10^{-13}$, discarding poles with residues below $10^{-8}$, which we treat as small-residue artifacts (Froissart doublets \cite{NakatsukasaSeteTrefethen2018}). A branch point appears in the approximant as a string of poles along a branch cut \cite{Trefethen2023}. Table~\ref{tab:aaa} lists the poles nearest to the real axis.
\begin{itemize}
\item In all cases the lowest pole lies within $2\%$ above the exact branch point $s_\ast$, with real part correct to $4\cdot10^{-3}$, and the next poles are stacked vertically above it.
\item The same pattern is obtained from the exact solution sampled at the same points (last column). The small offset is therefore a property of the rational representation of a branch point, not an error of the collocation solution.
\item The first row uses the unmapped Chebyshev solution for $\delta=10^{-2}$, computed on a grid that uses no information about the singularities. It locates them as accurately as the mapped solutions.
\end{itemize}
For problems without a computable time function, a coarse solution followed by AAA could thus provide the centre and width of the map, as in the a posteriori approach of \cite{TeeTrefethen2006}; we have not tested such a two-stage procedure here.

\begin{table}[t]
\centering\footnotesize
\setlength{\tabcolsep}{3pt}
\caption{Poles of the AAA approximants of the computed pinned solutions in the upper half-plane, in the variable $s=(x-x_0)/\mu$ ($s=x/\mu$ for the Arrhenius rate of Section~\ref{sec:general}): the lowest pole and the imaginary parts of the next two. Last column: lowest pole of the AAA approximant of the exact solution (for the Arrhenius rate the high-precision reference solution) sampled at the same points.}
\label{tab:aaa}
\begin{tabular}{ccccccc}
\toprule
problem & $N$ & grid & exact $s_\ast$ & lowest pole & next poles, $\Im$ & exact solution\\
\midrule
$q=2$, $\delta=10^{-2}$ & 1024 & unmapped & $3.1416i$ & $-0.0014+3.1772i$ & 3.468, 4.079 & $-0.0036+3.1765i$ \\
$q=2$, $\delta=10^{-6}$ & 256 & sinh map & $3.1416i$ & $-0.0034+3.1822i$ & 3.513, 4.207 & $-0.0010+3.1777i$ \\
$q=2$, $\delta=10^{-10}$ & 384 & sinh map & $3.1416i$ & $-0.0014+3.1888i$ & 3.560, 4.202 & $-0.0010+3.1777i$ \\
$q=3$, $\delta=10^{-3}$ & 256 & sinh map & $3.1416i$ & $-0.0020+3.1899i$ & 3.494, 4.110 & $-0.0024+3.1885i$ \\
$q=3$, $\delta=10^{-6}$ & 416 & sinh map & $3.1416i$ & $-0.0038+3.1988i$ & 3.517, 4.042 & $-0.0024+3.1885i$ \\
Arrhenius, $\beta=6$ & 384 & sinh map & $1.2647+3.1399i$ & $1.2668+3.2014i$ & 3.580, 4.203 & $1.2685+3.1904i$ \\
 
\bottomrule
\end{tabular}
\end{table}

\section{Conclusions}

Global spectral collocation for the ignition problems $y'=y^q(1-y)$, $y(0)=\delta$, faces two obstacles: an interior layer that polynomials resolve only with $O(1/\mu)$ nodes, and an initial condition that makes the discrete problem ill-conditioned. Both can be overcome by exploiting the structure of the equation.
\begin{itemize}
\item All solutions are translates of the master profile $Y_q=\Theta_q^{-1}$, whose singularities lie exactly at $\pm(2k+1)\pi i$. The position of the transition layer, $x_0$, and the half-width $\pi\mu$ of the strip of analyticity are therefore known in closed form. The value $\pi=\pi/|f_q'(1)|$ is set by the relaxation rate at the stable equilibrium.
\item For the sinh map of Tee and Trefethen built on these two parameters, the analyticity of the solution predicts the rate $\rho_\ast^{-N}\approx e^{-\pi N/(2L)}$, $L\approx\ln(2/(\pi\mu))$, so that $O(\ln(1/\mu))$ nodes suffice instead of $O(1/\mu)$; this rate is observed in double and in $256$-bit arithmetic. Centring the map only a few layer widths away from $x_0$ loses most of this gain.
\item Replacing the initial condition by an equivalent interior condition at the maximiser of $|\varphi'f_q|$, for the unknown $u=\varphi(y)$, bounds the propagator of the continuous linearized equation by one; for $u=\ln y$ this is the maximiser of $f_q(y)/y$. In the experiments Newton's method then converges in five or six iterations to the interpolation error for layer widths down to $10^{-12}$, also with the pin far from its optimal position, whereas the IVP form fails or converges to wrong discrete solutions, also in an integral (Birkhoff) reformulation.
\end{itemize}
The construction requires only the time function $T(y)=\int\dd y/f(y)$ and the nearest singularity of its inverse. For scalar autonomous equations $y'=f(y)$ on a monotone orbit, $T$ can be evaluated by quadrature, and a candidate singularity follows from Remark~\ref{rem:residue}, from quadrature along a path, or a posteriori from a computed solution by AAA approximation; that it is the nearest one needs an argument as in Proposition~\ref{prop:sing} or numerical evidence. For an Arrhenius reaction rate, whose initial state is not an equilibrium and whose singularity comes from an essential singularity of $f$, Section~\ref{sec:general} shows in five cases that the numerically computed quantities give the same accuracy and convergence rates as the closed forms do for \eqref{eq:fq}, although Newton's method needs more iterations. Problems without translation invariance, such as non-autonomous equations or systems, are a different matter. There the map parameters can still be estimated a posteriori (Section~\ref{sec:aaa}), but the position of an equivalent interior condition is not known in advance; extending the pinning idea to such problems is left for future work.

\section*{Acknowledgements}
The author thanks C\u{a}lin-Ioan Gheorghiu for his careful reading of earlier versions of this paper and for many valuable comments and suggestions, in particular on the qualitative analysis of the equilibria, the comparison with Chebfun, and the use of the AAA algorithm.

\section*{Declarations}

\paragraph{Funding.} This research received no specific grant from any funding agency in the public, commercial, or not-for-profit sectors.

\paragraph{Competing interests.} The author has no competing interests to declare.

\paragraph{Data and code availability.} All scripts and data needed to reproduce the tables and figures are publicly available at \url{https://github.com/boros91/flame-collocation} and archived at Zenodo, \url{https://doi.org/10.5281/zenodo.23479185}. They include implementations of the method in Python and MATLAB, the double-precision experiments (Python), the extended-precision computations of Section~\ref{sec:hp} in Python (mpmath and FLINT/Arb), the computations of Section~\ref{sec:general}, and the MATLAB script for the Chebfun comparison of Section~\ref{sec:chebfun}.

\paragraph{Author contributions.} The author is the sole contributor to this work.

\paragraph{Use of AI tools.} The author used the AI assistant Claude (Anthropic) to help write and check the code, verify the numerical results against the saved outputs, and edit the text. All results were checked by the author, who takes full responsibility for the content of this paper.


\end{document}